\newtheorem{theorem}{Theorem}
\theoremstyle{plain}
\newtheorem{corollary}{Corollary}
\newtheorem{definition}{Definition}
\newtheorem{example}{Example}
\newtheorem{proposition}{Proposition}
\numberwithin{equation}{section}
\begin{document}
\title{Existence of Life in Lenia}
\author{Craig Calcaterra \& Axel Boldt}
\address{Department of Mathematics and Statistics, Metropolitan State University, Saint Paul, Minnesota, USA}
\email{craig.calcaterra @ metrostate dot edu}
\date{March 27, 2022}

\begin{abstract}
Lenia is a continuous generalization of Conway's Game of Life. Bert
Wang-Chak Chan has discovered and published many seemingly organic dynamics in
his Lenia simulations since 2019. These simulations follow the Euler curve
algorithm starting from function space initial conditions. The
Picard-Lindel\"{o}f Theorem for the existence of integral curves to Lipschitz
vector fields on Banach spaces fails to guarantee solutions, because the
vector field associated with the integro-differential equation defining Lenia
is discontinuous. However, we demonstrate the dynamic Chan is using to
generate simulations is actually an arc field and not the traditional Euler
method for the vector field derived from the integro-differential equation.
Using arc field theory we prove the Euler curves converge to a unique flow
which solves the original integro-differential equation. Extensions are
explored and the modeling of entropy is discussed.

Keywords: arc fields; discontinuous vector fields; integro-differential
equations; entropy models
\end{abstract}

\maketitle

\section{Introduction}

Lenia is a continuous model proposed by Bert Wang-Chak Chan around 2018
\cite{Chan1} which generalizes the Game of Life. The Game of Life (GoL) is a
discrete dynamical system, proposed by Conway in 1970. Lenia follows simpler
generalizations of GoL, such as ``Larger than Life'' \cite{Evans},
\cite{Evans2}, ``RealLife'' \cite{Pivato}, and ``SmoothLife'' \cite{Rafler}.

The original Game of Life is a cellular automata, a discrete dynamical system
on $\mathbb{Z}^{2}$. Its rules can be written with convolution on the state
space $\mathbb{Z}^{2}$ in the following way. In this form the rules are then
straightforward to generalize to continuous dynamical systems. Denote the
Chebyshev distance, or $L^{\infty}$ metric, on $\mathbb{Z}^{2}$ by $\left\|
x\right\|  _{\infty}:=\underset{i\in\left\{  1,2\right\}  }{\max}\left|
x_{i}\right|  $. Define the kernel $\mathcal{K}:\mathbb{Z}^{2}\rightarrow
\mathbb{R}$ by%
\[
\mathcal{K}\left(  x\right)  :=\left\{
\begin{array}
[c]{cc}%
1 & \text{if }\left\|  x\right\|  _{\infty}\leq1\\
0 & \text{otherwise.}%
\end{array}
\right.
\]
For another function $g:\mathbb{Z}^{2}\rightarrow\mathbb{R}$ define the
discrete convolution operator $\ast$ by $\left(  \mathcal{K}\ast g\right)
:\mathbb{Z}^{2}\rightarrow\mathbb{R}$ with%
\[
\left(  \mathcal{K}\ast g\right)  \left(  x\right)  :=\underset{y\in
\mathbb{Z}^{2}}{\sum}\mathcal{K}\left(  y\right)  g\left(  x-y\right)
\]
Then the discrete dynamical system $f:\mathbb{Z}^{2}\times\mathbb{N\rightarrow
}\left\{  0,1\right\}  $ for GoL is given by%
\begin{equation}
f_{t+1}\left(  x\right)  =\left\{
\begin{array}
[c]{ll}%
1 & \text{if }f_{t}\left(  x\right)  =0\text{ and }\left(  \mathcal{K}\ast
f_{t}\right)  \left(  x\right)  =3\text{ (birth)}\\
1 & \text{if }f_{t}\left(  x\right)  =1\text{ and }\left(  \mathcal{K}\ast
f_{t}\right)  \left(  x\right)  \in\left\{  3,4\right\}  \text{ (survival)}\\
0 & \text{otherwise (death).}%
\end{array}
\right. \label{LineGoLInstructions}%
\end{equation}
for $t\in\mathbb{N}$ and $x\in\mathbb{Z}^{2}$. The initial condition is given
by an arbitrary function $f_{0}:\mathbb{Z}^{2}\rightarrow\left\{  0,1\right\}
$.

What has made GoL so famous is that these finite, simple rules are enough to
create infinitely complex, sometimes even beautiful, dynamics. In his public
introduction of the GoL instructions in 1970, Martin Gardner \cite{Gardner}
wrote, ``Because of Life's analogies with the rise, fall and alterations of a
society of living organisms, it belongs to a growing class of what are called
`simulation games' (games that resemble real-life processes).'' Thanks to the
existence of ``gliders'' in GoL, Conway showed it is possible to carefully set
the initial conditions so that iterating $f$ generates structures with the
behaviors of the three logical gates, AND, OR, NOT. This proves GoL is a
universal Turing machine, and therefore capable of approximating any
mathematical operation. Therefore the most complex mathematical simulations
imaginable may be faithfully represented by GoL with the right choice of
initial condition.

GoL is a recursive function on $\mathbb{Z}^{2},$ and simply being well-defined
is enough to guarantee the existence of the sequence of steps starting with
any initial condition. So it is obvious ``life exists'' in GoL starting from
any initial condition. But it is not so obvious solutions exist for its
continuous generalizations.

Chan's instructions generalize Conway's by using decreasingly-small, discrete
state space locations and decreasingly-small time steps until, in the limit,
the instructions hopefully converge to a continuous dynamical system. The
resulting simulation is intended to mimic some of the aspects of actual
material life. Like GoL, Lenia appears to give complex dynamics with
relatively simple instructions.

However, the existence of the limiting processes for these continuous
instructions are difficult to prove. In fact, we show in this paper that the
instructions for Lenia are an integro-differential equation which gives a
discontinuous vector field on an infinite dimensional metric space. The basic
existence theorem for solutions to ODEs on Banach spaces, the
Picard-Lindel\"{o}f Theorem, requires Lipschitz continuous vector fields, and
is therefore not applicable.

Instead of using a vector field on a Banach space, we generalize to an arc
field on a metric space \cite{CalcBleecker} and show solutions exist there.
Then we prove that the solution to the arc field solves the original
integro-differential equation, thus proving the existence of the generalized
Game of Life dynamics in Lenia.

Two major advantages arise from this effort to extend the discrete model to
the continuous. First, there is the possibility of greater descriptive power
and more complicated dynamics in a continuous model. Second, the continuous
limit will have computable laws available, such as hard limits on measurements
of the total mass or higher order moments under particular parameter choices.
There are more theoretical results for continuous dynamical systems than
discrete systems, because of the control that the continuum $\mathbb{R}$ gives
compared with the gaps in discrete $\mathbb{Z}$.

\bigskip

In the next section we will review Chan's Lenia model, and compare it to GoL.
In the third section we interpret Chan's instructions as an
integro-differential equation. This gives rise to a vector field which,
however, is discontinuous and incompatible with the Euler curve approach Chan
uses to simulate the Lenia dynamic. We instead define an arc field and show
that Chan's simulation is the Euler approximation of solutions to this arc
field. The main result of the paper is to show that the Euler approximations
will converge to a solution of the integro-differential equation.

To achieve this we first review the theory of arc fields in Section 5, using
it to prove the existence and uniqueness of the limit of the Lenia algorithm
in Section 7. Then we show the resulting solutions also solve Lenia's
integro-differential equation in Section 8. In the final section, we build on
those solutions to create more complicated models. We feed the creatures food,
and put them in competition with each other using generalizations of the
Lotka--Volterra predator-prey ODEs.

\section{Lenia: continuous version of GoL}

Chan's formulation \cite[(17), (18), p. 266]{Chan1} of the Lenia dynamic
$f:\mathbb{R}^{n}\times\mathbb{R}\rightarrow\left[  0,1\right]  $ where
$\left(  x,t\right)  \longmapsto f_{t}\left(  x\right)  $ for $x\in
\mathbb{R}^{n} $ and $t\in\left[  0,\infty\right)  $ is given by%
\begin{equation}
f_{t+dt}\left(  x\right)  =f_{t}\left(  x\right)  +dt\left[  G(K\ast
f_{t})\left(  x\right)  \right]  _{-f_{t}\left(  x\right)  /dt}^{\left(
1-f_{t}\left(  x\right)  \right)  /dt}\label{LineChanFormulation}%
\end{equation}
for positive infinitesimal $dt$. We discuss this formula in the next two paragraphs.

Here $K$ is a kernel function $K:\mathbb{R}^{n}\rightarrow\mathbb{R}$ which is
assumed to be $L^{1}$ so the continuous convolution%
\[
\left(  K\ast f_{t}\right)  \left(  x\right)  :=\int_{\mathbb{R}^{n}}K\left(
x-y\right)  f_{t}\left(  y\right)  dy
\]
is well defined. $G:\mathbb{R}\rightarrow\mathbb{R}$ is the activation
function which we will assume to be bounded and Lipschitz continuous with
constant $C_{G}\geq0$ such that $\left|  G\left(  x\right)  -G\left(
y\right)  \right|  \leq C_{G}\left|  x-y\right|  $. Finally, the square
brackets denote the clip function%
\[
\left[  x\right]  _{a}^{b}:=\left\{
\begin{array}
[c]{cc}%
b & \text{if }x\geq b\\
x & \text{if }a<x<b\\
a & \text{if }x\leq a
\end{array}
\right.  \text{.}%
\]

Basic choices of kernel $K$ and activation function $G$ used to create
complicated dynamics satisfy $K:\mathbb{R}^{2}\rightarrow\mathbb{R}^{+}$ such
as $K\left(  x,y\right)  :=\exp\left(  4-\frac{1}{r\left(  1-r\right)
}\right)  1_{\left\{  0<r<1\right\}  }\left(  x,y\right)  $ where
$r=\sqrt{x^{2}+y^{2}}$and $1_{S}$ is the indicator function on the set $S$,
and $G:\left[  0,1\right]  \rightarrow\left[  -1,1\right]  $ such as $G\left(
x\right)  :=2\exp\left(  -\left(  x-\frac{1}{2}\right)  ^{2}/2\right)  -1$.
Such choices were initially explored by Chan in \cite{Chan1}.

Line $\left(  \ref{LineChanFormulation}\right)  $ is meant to describe the
time evolution of the creatures of Lenia. The creature at time $t$ is
described by the function $f_{t}:\mathbb{R}^{n}\rightarrow\left[  0,1\right]
$ where usually $n=2$ or $3$.

For a given a creature $f$ the value $f_{t}\left(  x\right)  \in\left[
0,1\right]  $ represents the density of mass of the creature at the location
in space $x\in\mathbb{R}^{n}$ at time $t$.

How do we understand the meaning of the kernel and activation functions? The
metaphor is not completely robust, because we are using an extremely simple
mathematical model to attempt the simulation of the complexity of organic
life, but the intuitive idea may be something like the following. The initial
condition of a creature $f_{0}$ represents its physical state at birth. The
kernel $K$ dictates how a creature with initial condition responds to its
environment. And the activation function $G$ encodes how the environment
supports creatures. So $K$ synthesizes and simplifies a great deal of
information if we think of these creatures as simulations of biological
organisms. This includes simplifying the result of the creature's DNA acting
toward expressing its morphology in response to its sustenance and neighbors
and other environmental factors. In sum, if we change the formula for $K$ then
we change the species of creature we are studying. If we change the initial
condition $f_{0}$ we change the individual within that species. Changes in $G$
may be seen as reflecting the ability of the environment to support certain
shapes of individuals. Recently in \cite{Hamon} complicated dynamics have been
discovered for kernels of the form%
\[
K\left(  x,y\right)  :=\overset{k}{\underset{i=1}{\sum}}b_{i}\exp\left(
-\frac{\left(  \frac{r}{c}-a_{i}\right)  ^{2}}{2w_{i}}\right)  \text{.}%
\]
There they used neural networks to explore parameter space $\left\{
a_{i},b_{i},w_{i}\right\}  $ for fixed $k$ and $c$ to discover initial
conditions which simulate adaptivity to obstacles in their environment.

\bigskip

Chan's formulation can be thought of as a continuous generalization of the
original GoL instructions in the following way (\textit{cf.} \cite[p.
258]{Chan1}). The original GoL dynamical system $f:\mathbb{Z}^{2}%
\times\mathbb{N\rightarrow}\left\{  0,1\right\}  $ can be rewritten as%
\[
f_{t+1}\left(  x\right)  =\left[  f_{t}\left(  x\right)  +G\left(  K\ast
f_{t}\right)  \left(  x\right)  \right]  _{0}^{1}%
\]
where the square brackets denote the clip function $\left[  x\right]  _{0}%
^{1}:=\min\left\{  \max\left\{  x,0\right\}  ,1\right\}  $ and the kernel
$K:\mathbb{Z}^{2}\rightarrow\left\{  0,1\right\}  $ is given by%
\[
K\left(  x\right)  :=\left\{
\begin{array}
[c]{cc}%
1 & \text{if }\left\|  x\right\|  _{\infty}=1\\
1/2 & \text{if }\left\|  x\right\|  _{\infty}=0\\
0 & \text{if }\left\|  x\right\|  _{\infty}>1
\end{array}
\right.
\]
and the activation function $G:\mathbb{R\rightarrow R}$ is given by%
\[
G\left(  x\right)  =2\cdot1_{\left[  2.5,3.5\right]  }\left(  x\right)
-1\text{.}%
\]

\section{Integro-differential equation}

Formula $\left(  \ref{LineChanFormulation}\right)  $ is not a traditional
differential because of the infinitesimals in the bounds of the clip function.
By replacing the infinitesimals $dt$ with $\Delta t>0$ and taking the limit,
we interpret Chan's formula as a forward integro-differential equation as
follows:%
\[
\frac{d}{dt^{+}}f_{t}\left(  x\right)  =\underset{\Delta t\rightarrow0^{+}%
}{\lim}\frac{f_{t+\Delta t}\left(  x\right)  -f_{t}\left(  x\right)  }{\Delta
t}=\underset{\Delta t\rightarrow0^{+}}{\lim}\left[  G(\left(  K\ast
f_{t}\right)  (x))\right]  _{-f_{t}(x)/\Delta t}^{\left(  1-f_{t}(x)\right)
/\Delta t}%
\]
so the Lenia formula is%
\begin{equation}
\frac{d}{dt^{+}}f_{t}\left(  x\right)  =V\left(  f_{t}\right)  (x):=\left\{
\begin{array}
[c]{cc}%
G(K\ast f_{t}(x)) & \text{if }0<f_{t}(x)<1\\
\left[  G(K\ast f_{t}(x))\right]  _{0}^{\infty} & \text{if }f_{t}\left(
x\right)  =0\\
\left[  G(K\ast f_{t}(x))\right]  _{-\infty}^{0} & \text{if }f_{t}\left(
x\right)  =1\text{.}%
\end{array}
\right. \label{LineIntegroDiffEQ1}%
\end{equation}
Line \ref{LineIntegroDiffEQ1} is an integro-differential equation because of
the convolution $K\ast f_{t}$.

It is tempting to interpret formula \ref{LineIntegroDiffEQ1} as a vector field
$V$ on a Banach space then attempt to use traditional ODE theory to show the
existence of integral curves to $V$. There are several problems with this
effort. The natural Banach space to use would be $L^{\infty}\left(
\mathbb{R}^{n},\mathbb{R}\right)  $. Then $V$ is defined on the closed
subspace $L^{\infty}\left(  \mathbb{R}^{n},\left[  0,1\right]  \right)  $
which might be extended easily enough. However, the derivative in
\ref{LineIntegroDiffEQ1} is taken pointwise and not in the Banach space sense.
In fact you can't make a vector field on $L^{\infty}$ specifying $V\left(
f\right)  $ by referring to particular values $f\left(  x\right)  $ at
particular positions $x\in\mathbb{R}^{n}$ because the members of $L^{\infty}$
are not merely functions but equivalence classes of functions up to measure
$0$. To be careful, we immediately need to pick a less commonly used Banach space.

We take as our context the Banach space $\mathcal{B}\left(  \mathbb{R}%
^{n},\mathbb{R}\right)  $ of bounded Lebesgue measurable functions
$f:\mathbb{R}^{n}\rightarrow\mathbb{R}$ with the supremum norm%
\[
\left\|  f\right\|  _{\infty}:=\underset{x\in\mathbb{R}^{n}}{\sup}\left|
f\left(  x\right)  \right|  \text{.}%
\]
We emphasize that we are not using the essential supremum on equivalence
classes of functions. $\mathcal{B}\left(  \mathbb{R}^{n},\mathbb{R}\right)  $
is a collection of actual functions\footnote{So why is $L^{\infty}$ more
commonly used in analysis? One answer is that $L^{\infty}$ is the dual of
$L^{1}$ and they occupy the ends of the $L^{p}$ spectrum. The elements of
$L^{1}$ require equivalence classes of functions for a clean definition, and,
unlike $L^{\infty}$, doesn't have an analog metric which can distinguish each
function and remain complete.
\par
We have the following inequalities:
\par
1. For $f\in L^{1}$ and $g\in L^{\infty}$ we have
\par
$\left\|  fg\right\|  _{1}\leq\left\|  f\right\|  _{1}$essup$\left(  \left|
g\right|  \right)  $ and
\par
2. For $f\in L^{1}$ and $g\in B\left(  \mathbb{R},\mathbb{R}\right)  $ we have
\par
$\left\|  fg\right\|  _{1}\leq\left\|  f\right\|  _{1}\sup\left(  \left|
g\right|  \right)  $
\par
\noindent However, the first inequality is sharper than the second as we can
see by changing $g$ arbitrarily on a set of measure $0$.
\par
Perhaps more importantly, the existence of the Hilbert space $L^{2}$ in the
middle of the $L^{p}$ spectrum has a basis which makes it crucial in
functional analysis and its applications, such as signal analysis.}, so we can
refer to $f\left(  x\right)  $ when defining $V\left(  f\right)  $. We take
the domain of the vector field $V$ to be the closed subset consisting of
measurable functions bounded by $\left[  0,1\right]  $%
\[
M:=\mathcal{B}\left(  \mathbb{R}^{n},\left[  0,1\right]  \right)  :=\left\{
f:\mathbb{R}^{n}\rightarrow\left[  0,1\right]  \mid f\text{ is Lebesgue
measurable}\right\}
\]
with the supremum metric $d_{\infty}\left(  f,g\right)  :=\left\|
f-g\right\|  _{\infty}$. Then it is straightforward to check $M\subset
\mathcal{B}\left(  \mathbb{R}^{n},\mathbb{R}\right)  $ is closed and therefore
$\left(  M,d_{\infty}\right)  $ is a complete metric space.

However, the greater obstacle to this effort is that the vector field $V$ is
not continuous as the following example shows.

\begin{example}
Analyzing $V$ near the constant $0$ function $f\left(  x\right)  =0$ with the
input $g\left(  x\right)  :=\epsilon1_{\left[  0,1\right]  }\left(  x\right)
$ for some small $\epsilon>0$ shows $V:M\rightarrow M$ is not continuous with
respect to $d_{\infty}$. For a typical growth function $G$ described in Chan's
paper, we have $G\left(  0\right)  =-1=G\left(  1\right)  $ and $G\left(
1/2\right)  =1$. (These numbers can be modified to fit many other choices of
$G$ and the argument still follows.) Then for kernel $K\in L^{1}$ we have
$K\ast f=0\approx K\ast g$ so $G\left(  K\ast f\right)  =-1\approx G\left(
K\ast g\right)  $. But even though $f$ and $g$ are $\epsilon$-close, the
formula for $V$ clips for $f$ but not for $g$, and we have%
\[
\left|  \left|  V\left(  f\right)  -V\left(  g\right)  \right|  \right|
\approx\left|  \left|  0-\left(  -1\right)  \right|  \right|  =1
\]
So the distance between $V\left(  f\right)  $ and $V\left(  g\right)  $ is not
small, even though $f$ and $g$ can be made arbitrarily close. $V$ is not
continuous in the $L^{p}\mathbb{\ }$metric for any $1\leq p\leq\infty$.
\end{example}

\bigskip

Therefore the basic differential equation theory for Lipschitz vector fields
on Banach spaces is not sufficient to guarantee the existence of solutions.
Another approach is required. The natural setting for this model is not vector
fields on linear spaces, but arc fields on metric spaces. In fact, Chan's
formula \cite[(9), p. 256]{Chan1}%
\begin{equation}
f_{t+\Delta t}(x)=\left[  f_{t}(x)+\Delta tG\left(  K\ast f_{t}\right)
(x)\right]  _{0}^{1}\label{LineChanFormAF}%
\end{equation}
is not the traditional Euler approximation for the integral curves to the
vector field $V$ which would instead start as $f_{t+\Delta t}\approx
f_{t}+\Delta tV\left(  f_{t}\right)  $. What Chan is actually doing is using
an arc field $X$ instead of the vector field $V$ for the Euler curve algorithm
to simulate life in Lenia, as explained in the next section.

\section{Lenia arc field}

Lenia lives on the metric space $M:=\mathcal{B}\left(  \mathbb{R}^{n},\left[
0,1\right]  \right)  $ with the supremum metric $d_{\infty}\left(  f,g\right)
:=\left\|  f-g\right\|  _{\infty}$. Chan's instruction $\left(
\ref{LineChanFormAF}\right)  $ for the motion of these creatures $f$ is the
Lenia arc field $X$ which is a map $X:M\times\left[  0,1\right]  \rightarrow
M$ given by%
\begin{equation}
X_{t}\left(  f\right)  =\left[  f+tG(K\ast f)\right]  _{0}^{1}%
\label{LineDefLeniaArcField}%
\end{equation}
Here $t$ represents time, and for each $t\in\left[  0,1\right]  $ the arc
field displays the tendency to change any creature $f\in M$ toward a new
creature $X_{t}\left(  f\right)  \in M$. Notice $X_{0}\left(  f\right)  =f$
which means the arc field starting at time $t=0$ with $f$ gives a
``direction'' for change, using a curve $X_{\cdot}\left(  f\right)  $ in the
metric space $M$.

The arc field $X$ is used to generate a continuous flow $F:M\times\left[
0,\infty\right)  \rightarrow M$ using the familiar Euler curve technique%
\[
F_{t}\left(  f\right)  =\underset{n\rightarrow\infty}{\lim}X_{t/n}^{\left(
n\right)  }\left(  f\right)
\]
where the superscript parentheses denote composition $n$ times:%
\[
X_{t}^{\left(  n\right)  }=\text{ }\underset{n\text{ compositions}%
}{\underbrace{X_{t}\circ X_{t}\circ\cdots\circ X_{t}}}\text{ }=\overset
{n}{\underset{k=1}{\bigcirc}}X_{t}\text{.}%
\]
The main result of this paper is to prove that this Euler curve algorithm
converges to a flow. We also prove the resulting flow is forward tangent to
the directions given by the arc field $X$, that the flow is unique, and the
flow exists for all forward time. These facts are proven in Section 2 after
reviewing the relevant results from arc field theory. Then we show this flow
is the desired solution to the original integro-differential equation.

\section{Arc field theory}

In this subsection we review the results of arc field theory
\cite{CalcBleecker} used to analyze Lenia throughout the rest of the paper.

\begin{definition}
\label{DefArcField}An \textbf{arc field} on a metric space $M$ is a continuous
map $X:M\times\left[  0,1\right]  \rightarrow M$ such that for all $f\in M$,
$X_{0}\left(  f\right)  =f$.

The \textbf{speed} of an arc field at $f\in M$ is
\[
\rho\left(  f\right)  :=\sup_{s\neq t}\frac{d\left(  X_{s}\left(  f\right)
,X_{t}\left(  f\right)  \right)  }{\left|  s-t\right|  }%
\]
i.e., the curve in $M$ starting at $f$ given by $X_{\left(  \cdot\right)
}\left(  f\right)  :\left[  0,1\right]  \rightarrow M$ is Lipschitz with
constant $\rho\left(  f\right)  $. An arc field $X$ has \textbf{linear speed
growth} if there is a point $f\in M$ and positive constants $c_{1}$ and
$c_{2}$ such that for all $r>0$%
\[
\rho\left(  f,r\right)  :=\sup\left\{  \rho\left(  g\right)  \mid g\in
B\left(  f,r\right)  \right\}
\]
satisfies
\begin{equation}
\rho\left(  f,r\right)  \leq c_{1}r+c_{2}\text{.}\label{LinGro}%
\end{equation}
\end{definition}

Notice $M$ is an arbitrary metric space and does not need to be a function
space, though that is the structure Lenia uses in its model.

\begin{definition}
A (forward) \textbf{flow} on a metric space $M$ is a continuous map
$F:M\times\left[  0,\infty\right)  \rightarrow M$ which satisfies for all
$f\in M$

\noindent$\left(  i\right)  \quad F_{0}(f)=f$

\noindent$\left(  ii\right)  $\quad$F_{t}(F_{s}(f))=F_{s+t}(f)$ for all
$s,t\geq0$.

$F$ is a \textbf{flow of an arc field }$X$ if in addition we have $F$ is
\textbf{(forward) tangent} to $X$, meaning%
\begin{equation}
\lim_{h\rightarrow0^{+}}\frac{d\left(  F_{t+h}\left(  f\right)  ,X_{h}\left(
F_{t}\left(  f\right)  \right)  \right)  }{h}=0\label{LineTangencyCondition}%
\end{equation}
for all $t\geq0$ and $f\in M$.
\end{definition}

Due to tangency, each curve from the flow $F_{\left(  \cdot\right)  }\left(
f\right)  :\left[  0,\infty\right)  \rightarrow M$ is the analog of an
integral curve (think of a solution to an evolutionary PDE), just as an arc
field $X$ is the metric space analog of a vector field.

The linear bound on speed in the definition of an arc field $X$ is used to
prove the resulting flow $F$ tangent to $X$ exists for all time. The ODE
$x^{\prime}=x^{2}$ for $x\left(  t\right)  \in M:=\mathbb{R}$ is the standard
example used to show the linear speed growth assumption is natural, since this
ODE has quadratic speed growth and solutions diverge to infinity in finite time.

\begin{proposition}
\label{ThmSpdBd}The Lenia arc field $\left(  \ref{LineDefLeniaArcField}%
\right)  $%
\[
X_{t}\left(  f\right)  \left(  x\right)  =\left[  f\left(  x\right)  +tG(K\ast
f)\left(  x\right)  \right]  _{0}^{1}%
\]
on $M:=\mathcal{B}\left(  \mathbb{R}^{n},\left[  0,1\right]  \right)  $ with
supremum metric $d_{\infty}$ has speed globally bounded by $\max\left|
G\right|  $.
\end{proposition}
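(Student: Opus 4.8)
The plan is to bound the speed $\rho(f) = \sup_{s\neq t} d_\infty(X_s(f), X_t(f))/|s-t|$ uniformly over $f \in M$ by $\max|G|$. The key observation is that the clip function $[\cdot]_0^1$ is $1$-Lipschitz in its argument, so pointwise estimates transfer directly to the supremum metric. First I would fix $f \in M$ and $x \in \mathbb{R}^n$, and write $a := f(x)$ and $b := (K\ast f)(x)$, both fixed real numbers (with $a \in [0,1]$). Then $X_t(f)(x) = [a + tG(b)]_0^1$ as a function of $t$ alone. For $s, t \in [0,1]$, the $1$-Lipschitz property of the clip gives
\[
\bigl| X_s(f)(x) - X_t(f)(x) \bigr| = \bigl| [a+sG(b)]_0^1 - [a+tG(b)]_0^1 \bigr| \leq \bigl| (a+sG(b)) - (a+tG(b)) \bigr| = |s-t|\,|G(b)|.
\]
Since $|G(b)| \leq \max|G|$ regardless of the value $b$, we get $|X_s(f)(x) - X_t(f)(x)| \leq |s-t|\max|G|$ for every $x$.

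Taking the supremum over $x \in \mathbb{R}^n$ on the left-hand side yields $d_\infty(X_s(f), X_t(f)) \leq |s-t|\max|G|$, hence $\rho(f) \leq \max|G|$, and since this bound is independent of $f$ we conclude $\sup_{f\in M}\rho(f) \leq \max|G|$. In particular the linear speed growth condition $\rho(f,r) \leq c_1 r + c_2$ of Definition \ref{DefArcField} holds trivially with $c_1 = 0$ and $c_2 = \max|G|$, which will be needed later to guarantee the flow exists for all forward time.

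There is essentially no hard part here; the only thing requiring a word of care is justifying that the clip function $[\cdot]_0^1$ is $1$-Lipschitz — which follows because it is the composition $\min(\cdot,1)\circ\max(\cdot,0)$ of two $1$-Lipschitz maps, or directly by checking the three cases in its piecewise definition. One should also note in passing that $X_t(f)$ genuinely lands in $M$: the clip forces values into $[0,1]$, and measurability of $X_t(f)$ follows from measurability of $f$ and of $K\ast f$ (the latter being well-defined since $K \in L^1$ and $f$ is bounded measurable), so $X$ is indeed a well-defined map $M \times [0,1] \to M$ and the speed is genuinely computed in the metric space $(M, d_\infty)$.
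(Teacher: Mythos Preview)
Your proof is correct and follows essentially the same approach as the paper: both use that the clip $[\cdot]_0^1$ is $1$-Lipschitz to strip it away, then bound $|G(K\ast f)|$ by $\max|G|$. The only cosmetic difference is that you work pointwise in $x$ and then take the supremum, whereas the paper carries the $\|\cdot\|_\infty$ norm throughout; your added remarks on well-definedness and linear speed growth are fine but go slightly beyond what the proposition itself asserts.
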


\begin{proof}%
\begin{align*}
\rho\left(  f\right)   & =\sup_{s\neq t}\frac{d\left(  X\left(  f,s\right)
,X\left(  f,t\right)  \right)  }{\left|  s-t\right|  }\\
& =\sup_{s\neq t}\frac{\left\|  \left[  f+sG(K\ast f)\right]  _{0}^{1}-\left[
f+tG(K\ast f)\right]  _{0}^{1}\right\|  _{\infty}}{\left|  s-t\right|  }\\
& \leq\sup_{s\neq t}\frac{\left\|  f+sG(K\ast f)-\left(  f+tG(K\ast f)\right)
\right\|  _{\infty}}{\left|  s-t\right|  }\\
& =\sup_{s\neq t}\left\|  G(K\ast f)\right\|  _{\infty}\leq\max\left|
G\right|  \text{.}%
\end{align*}
\end{proof}

The following conditions are used to guarantee arc fields on arbitrary
complete metric spaces generate flows via Euler curves.

\noindent\textbf{Condition E1:} For each $f_{0}\in M$, there are constants
$r>0,$ $\delta>0$ and $\Lambda\in\mathbb{R}$ such that for all $f,g\in
B\left(  f_{0},r\right)  $ and $0\leq t<\delta$%
\[
d\left(  X_{t}\left(  f\right)  ,X_{t}\left(  g\right)  \right)  \leq\left(
1+t\Lambda\right)  d\left(  f,g\right)  \text{.}%
\]

\noindent\textbf{Condition E2}: For each $f_{0}\in M$, there are constants
$r>0,$ $\delta>0$ and $\Omega\in\mathbb{R}$ such that for all $f\in B\left(
f_{0},r\right)  $ and $0\leq t\leq s<\delta$%
\[
d\left(  X_{s+t}\left(  f\right)  ,X_{t}\left(  X_{s}\left(  f\right)
\right)  \right)  =st\Omega\text{.}%
\]

Infinitesimally E1 limits the spread of $X$ in a linear fashion. E2 restrains
$X$ to be almost flow-like, since $X$ would be a flow if it satisfied E2 with
constant $\Omega=0$.

\begin{theorem}
\label{ThmFundAF}Let $X$ be an arc field on a complete metric space $M$ which
satisfies E1 and E2 and has linear speed growth. Then there exists a unique
forward flow $F\ $tangent to $X$.

The Euler curve algorithm converges to this flow, meaning%
\[
F_{t}\left(  f\right)  =\underset{n\rightarrow\infty}{\lim}X_{t/n}^{\left(
n\right)  }\left(  f\right)
\]
where%
\[
X_{t}^{\left(  n\right)  }=\text{ }\underset{n\text{ compositions}%
}{\underbrace{X_{t}\circ X_{t}\circ\cdots\circ X_{t}}}\text{ }=\overset
{n}{\underset{k=1}{\bigcirc}}X_{t}\text{.}%
\]
\end{theorem}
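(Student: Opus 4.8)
The plan is to run the classical Euler-polygon and Gr\"onwall argument for ODE existence and uniqueness, but with the metric $d$ in place of a norm, E1 in place of a Lipschitz bound on the vector field, and E2 in place of the exactness of straight-line flow. Fix $f_{0}\in M$ and choose $r,\delta,\Lambda,\Omega$ realizing E1 and E2 on $B(f_{0},r)$. The first step is an a priori bound: since an Euler polygon $X_{t/n}^{(n)}(f)$ moves at speed at most $\rho(f,r)\leq c_{1}r+c_{2}$ for as long as it stays in $B(f_{0},r)$, a discrete Gr\"onwall inequality produces a time $T>0$, independent of $n$, such that every vertex of every Euler polygon built over $[0,T]$ starting near $f_{0}$ remains inside $B(f_{0},r)$. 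This confines the whole construction to a region on which E1 and E2 hold with the fixed constants $\Lambda,\Omega$, which is exactly what the next estimates require.

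The heart of the proof is to show the Euler approximations $\Phi_{n}(t):=X_{t/n}^{(n)}(f)$ form a Cauchy sequence, uniformly for $t\in[0,T]$. The device is a refinement estimate comparing $\Phi_{n}$ with $\Phi_{mn}$: grouping the $mn$ steps of $\Phi_{mn}$ into $n$ consecutive blocks of $m$ steps, one compares, block by block from a common base point $g$, the macro-step $X_{h}(g)$ against $(X_{h/m})^{(m)}(g)$, where $h=t/n$. Telescoping this block difference and applying E2 at each of the $m$ levels bounds it by $O(h^{2})$, uniformly in $m$; propagating these $n$ block errors forward through the remaining macro-steps, each propagation inflating distances by the E1 factor $(1+h\Lambda)$, so that the cumulative amplification stays below $(1+h\Lambda)^{n}\leq e^{|\Lambda|T}$, yields $d(\Phi_{n}(t),\Phi_{mn}(t))\leq n\cdot O(h^{2})\cdot e^{|\Lambda|T}=O(t^{2}/n)$. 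Hence $\{\Phi_{n}(t)\}$ is Cauchy, uniformly in $t$; completeness of $M$ supplies a limit $F_{t}(f)$; uniform convergence makes $F$ continuous in $t$; and continuity in $f$ follows from the E1 bound $(1+h\Lambda)^{n}\leq e^{|\Lambda|t}$ applied to the whole polygon, then passing to the limit. The identity $F_{0}(f)=f$ is immediate since $X_{0}=\mathrm{id}$, and the semigroup law $F_{s+t}=F_{t}\circ F_{s}$ holds because an Euler polygon for elapsed time $s+t$ cuts at time $s$ into polygons for times $s$ and $t$, so the limits agree; the short-time flows thereby patch together, linear speed growth preventing blow-up, so that $F$ extends to all of $M\times[0,\infty)$.

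It remains to verify tangency and uniqueness. For tangency, the semigroup property gives $F_{t+h}(f)=F_{h}(F_{t}(f))$, so that (\ref{LineTangencyCondition}) reduces to showing $d(F_{h}(g),X_{h}(g))=o(h)$ uniformly for $g$ near $F_{t}(f)$; but that is precisely the $O(h^{2})$ consistency estimate above specialized to a single macro-step (the $n=1$ polygon versus its limit). For uniqueness, if $F$ and $\widetilde{F}$ are both flows tangent to $X$, set $\phi(t):=d(F_{t}(f),\widetilde{F}_{t}(f))$; the semigroup property, tangency on both sides, the triangle inequality, and E1 give $\phi(t+h)\leq d(X_{h}(F_{t}(f)),X_{h}(\widetilde{F}_{t}(f)))+o(h)\leq(1+h\Lambda)\phi(t)+o(h)$, hence $D^{+}\phi\leq\Lambda\phi$ with $\phi(0)=0$ and $\phi\geq0$, and Gr\"onwall forces $\phi\equiv0$. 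The main obstacle is the refinement estimate of the second paragraph: arranging the bookkeeping so that the quadratic-in-stepsize E2 errors, once summed over the $n$ macro-steps and amplified by the E1 propagation factors, still tend to $0$ as $n\to\infty$, while simultaneously confirming that every intermediate vertex stays in the ball where E1 and E2 are available.
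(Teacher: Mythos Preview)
The paper does not supply its own proof of this theorem: immediately after the statement it says ``The proof of this fundamental theorem is given in \cite{CalcBleecker}'' and moves on. Your outline is a correct sketch of the Euler-polygon/Gr\"onwall argument that is indeed the method used in that reference, so within the present paper there is no proof to compare against, and your proposal stands as an accurate summary of the cited argument.
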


The proof of this fundamental theorem is given in \cite{CalcBleecker}.

When $X$ merely has locally bounded speed instead of linearly bounded speed,
we can only guarantee a local flow, meaning time $t$ may be limited in
different ways before solutions blow up, depending on the initial condition
$f$.

Theorem \ref{ThmFundAF} generalizes the classical Fundamental Theorem of ODEs
(also known as the Picard-Lindel\"{o}f Theorem or the Cauchy-Lipschitz
Theorem) which states that Lipschitz vector fields on Banach spaces have
unique integral curves. One of the results of this paper is to show that
Theorem \ref{ThmFundAF} also guarantees solutions to some interesting
non-continuous vector fields on function spaces.

\section{Clip function properties}

In this section we collect some facts about the clip function that will be
used in the existence proof, Theorem \ref{ThmGen1}, below. The clip function%
\[
\left[  x\right]  _{a}^{b}:=\left\{
\begin{array}
[c]{cc}%
b & \text{if }x\geq b\\
x & \text{if }a<x<b\\
a & \text{if }x\leq a
\end{array}
\right.  =\min\left\{  \max\left\{  a,x\right\}  ,b\right\}
\]
is well-defined whenever $a\leq b$. We will also use the notation
\begin{align*}
\left[  x\right]  _{a} &  :=\max\left\{  x,a\right\} \\
\left[  x\right]  ^{b} &  :=\min\left\{  x,b\right\}
\end{align*}
for the one-sided clips. Notice $\left[  x\right]  _{0}=x\cdot1_{\left[
0,\infty\right)  }\left(  x\right)  $ is a low pass filter where $1_{S}$ is
the indicator function on the set $S$%
\[
1_{s}\left(  x\right)  :=\left\{
\begin{array}
[c]{cc}%
1 & x\in S\\
0 & \text{otherwise.}%
\end{array}
\right.
\]
More generally $\left[  x\right]  _{a}=\left(  x-a\right)  \cdot1_{\left[
0,\infty\right)  }\left(  x\right)  +a$ and $\left[  x\right]  ^{b}=\left(
x-b\right)  \cdot1_{\left(  -\infty,0\right]  }\left(  x\right)  +b$. So
composing them shows $\left[  x\right]  _{a}^{b}=\left[  \left[  x\right]
^{b}\right]  _{a}=\left[  \left[  x\right]  _{a}\right]  ^{b}$ is a band pass filter.

Let $a\leq b$ and $c\leq d$ and $x,y\in\mathbb{R}$. It is straightforward to
verify the following identities and inequalities

\begin{itemize}
\item $\left[  rx\right]  _{a}^{b}=r\left[  x\right]  _{a/r}^{b/r}$ for $r>0$
and $\left[  rx\right]  _{a}^{b}=r\left[  x\right]  _{b/r}^{a/r}$ for $r<0$

\item $\left[  x+y\right]  _{a}^{b}=\left[  x\right]  _{a-y}^{b-y}+y$

\qquad$\left[  x+y\right]  ^{b}=\left[  x\right]  ^{b-y}+y$

$\qquad\left[  x+y\right]  _{a}=\left[  x\right]  _{a-y}+y$

\item $\left[  x\right]  _{a}^{b}=\left[  x\right]  ^{b}-\left[  x\right]
^{a}+a$ which gives

$\left[  x\right]  _{a}^{b}-\left[  x\right]  _{c}^{d}=\left[  x\right]
_{d}^{b}-\left[  x\right]  _{c}^{a}-\left(  d-a\right)  $ assuming $b\geq d$
and $a\geq c$

\item $\left[  \left[  x\right]  _{a}^{b}\right]  _{c}^{d}=\left[  x\right]
_{\max\left\{  a,c\right\}  }^{\min\left\{  b,d\right\}  }$ when $\max\left\{
a,c\right\}  \leq\min\left\{  b,d\right\}  $
\end{itemize}

\bigskip

\begin{itemize}
\item $\left|  \left[  x\right]  _{a}^{b}-\left[  y\right]  _{a}^{b}\right|
\leq\left|  x-y\right|  $

i.e., the clip function $x\longmapsto\left[  x\right]  _{a}^{b}$ is Lipschitz
with constant $1$.

\qquad$\left|  \left[  x\right]  ^{b}-\left[  y\right]  ^{b}\right|
\leq\left|  x-y\right|  $

\qquad$\left|  \left[  x\right]  _{a}-\left[  y\right]  _{a}\right|
\leq\left|  x-y\right|  $

\item $\left|  \left[  x\right]  _{a}^{b}-\left[  x\right]  _{c}^{d}\right|
\leq\max\left\{  \left|  a-c\right|  ,\left|  b-d\right|  \right\}  $%
\newline and generalizing the two previous facts, we have

\item $\left|  \left[  x\right]  _{a}^{b}-\left[  y\right]  _{c}^{d}\right|
\leq\max\left\{  \left|  x-y\right|  ,\left|  a-c\right|  ,\left|  b-d\right|
\right\}  $

\item $0\leq\left[  x\right]  _{b}^{c}-\left[  x\right]  _{a}^{b}\leq c-a$
assuming $a\leq b\leq c$

\item $\left|  \left[  x\right]  _{a}^{b}\right|  \leq\left|  x\right|  $
whenever $a\leq0\leq b$
\end{itemize}

\section{Existence and Uniqueness of Solutions to the Lenia Arc Field}

We will show the Lenia arc field $\left(  \ref{LineDefLeniaArcField}\right)  $
satisfies the regularity conditions, E1 and E2, which guarantee the existence
and uniqueness of a forward flow tangent to the arc field by Theorem
\ref{ThmFundAF}. However the calculations get a bit complicated. As a warmup
for using the clip function on the full arc field, and to get a feel for how
arc fields can handle discontinuities in directions, we recommend first
working through the following simpler example which is not used in the
remainder of the paper.

\begin{example}
\label{ExToyDisc}Let $V:\mathbb{R}\rightarrow\mathbb{R}$ be the discontinuous
vector field given by%
\[
V\left(  x\right)  :=\left\{
\begin{array}
[c]{cc}%
-1 & \text{if }x>0\\
0 & \text{if }x\leq0\text{.}%
\end{array}
\right.
\]
$V\left(  x\right)  $ gives the direction for $x\in M:=\mathbb{R}$. Even
though the directions from $V$ are clear, and the resulting dynamic has a
unique formula for all time, because $V$ is discontinuous the traditional
theory (Picard-Lindel\"{o}f Theorem) does not apply.

To make an analog arc field to $V$ we replace the vectors with curves in the
same direction. Therefore we mechanically choose $X:M\times\left[  0,1\right]
\rightarrow M$ with $M:=\mathbb{R}$ defined by%
\[
X_{t}\left(  x\right)  :=\left[  x-t\right]  _{0}%
\]
where the square brackets again denote the clip function. This arc field has
the dynamic that solutions with initial conditions $x>0$ move down until
hitting $0$, then stop. This immediate stop displays a discontinuous direction
change, but the arc field is still continuous in $t$ for each $x$.

Let's check E1 using $\left|  \left[  x\right]  _{0}-\left[  y\right]
_{0}\right|  \leq\left|  x-y\right|  $.%
\begin{align*}
d\left(  X_{t}\left(  x\right)  ,X_{t}\left(  y\right)  \right)   &
\leq\left|  \left[  x-t\right]  _{0}-\left[  y-t\right]  _{0}\right| \\
& \leq\left|  \left(  x-t\right)  -\left(  y-t\right)  \right|  =d\left(
x,y\right)  \text{.}%
\end{align*}
so $\Lambda=0$ is sufficient.

Then E2 is calculated using the property of the clip $\left[  x+y\right]
_{0}=\left[  x\right]  _{-y}+y$%
\begin{align*}
d\left(  X_{s+t}\left(  x\right)  ,X_{t}\left(  X_{s}\left(  x\right)
\right)  \right)   & =\left|  \left[  x-\left(  s+t\right)  \right]
_{0}-\left[  \left[  x-s\right]  _{0}-t\right]  _{0}\right| \\
& =\left|  \left[  -\left(  s+t\right)  \right]  _{-x}+x-\left(  \left[
\left[  -s\right]  _{-x}+x-t\right]  _{0}\right)  \right| \\
& =\left|  \left[  -\left(  s+t\right)  \right]  _{-x}+x-\left(  \left[
\left[  -s\right]  _{-x}\right]  _{-x+t}+x-t\right)  \right| \\
& =\left|  \left[  -\left(  s+t\right)  \right]  _{-x}-\left(  \left[
-s\right]  _{-x+t}-t\right)  \right| \\
& =\left|  \left[  -\left(  s+t\right)  \right]  _{-x}-\left(  \left[
-s-t\right]  _{-x}\right)  \right|  =0
\end{align*}
where the third to fourth line calculation used $\left[  x\right]  _{a}%
:=\max\left\{  a,x\right\}  $ so that $\left[  \left[  -s\right]
_{-x}\right]  _{-x+t}=\left[  -s\right]  _{-x+t}$ for $t\geq0$.

Since the speed of $X$ is globally bounded by $1$ on the complete metric space
$M$, Theorem \ref{ThmFundAF} guarantees the Euler curves generate a unique
forward flow tangent to $X$. Notice $F=X$ in this trivial scenario.
\end{example}

\textit{N.b.}: Precisely how uniqueness of solutions backward in time fails is
quite complicated for the Lenia dynamic. Considering the reverse dynamic of
the previous example is illuminating for understanding the issue. In
particular notice that two different initial conditions can coincide after a
finite amount of time. For instance $F_{t}\left(  x\right)  =0$ for all $x\in
M$ for any $t\geq\left|  x\right|  $. The forward flow is still unique, but
the backward flow is not well-defined from $x=0$ since there are multiple
inverses to choose from. This doesn't happen with Lipschitz continuous vector
fields, where forward and backward flows exist uniquely, so integral curves
cannot intersect at any finite time $t$. Similarly in the full Lenia model,
depending on the choice of $K$ and $G$, it is possible for many different
initial conditions $f$ to go extinct, i.e., $F_{t}\left(  f\right)  =0$ after
finite time $t$.

\bigskip

We next prove the Lenia arc field $X$ has a flow using similar calculations
from the above toy example, Example \ref{ExToyDisc}. Now, however, there are
further complications, because the number of variables to consider is infinite
instead of 1. Since we are taking the supremum over all possible
$x\in\mathbb{R}^{n}$ we have to account for all possible cases. The clip
function%
\[
\left[  x\right]  _{a}^{b}=\left\{
\begin{array}
[c]{cc}%
b & \text{if }x\geq b\\
x & \text{if }a<x<b\\
a & \text{if }x\leq a
\end{array}
\right.
\]
naturally breaks into 3 cases each time it's used. When we check E2 we need to
consider $X_{t}\left(  X_{s}\right)  $ which means we need to deal with the
clip of the clip, so the cases multiply, and the proof is inevitably complicated.

The major new result of the paper is the following theorem.

\begin{theorem}
\label{ThmExUn} Consider the complete metric space $M:=\mathcal{B}\left(
\mathbb{R}^{n},\left[  0,1\right]  \right)  $ with the supremum metric
$d_{\infty}$. Let $K\in L^{1}\ $meaning $K:\mathbb{R}^{n}\rightarrow
\mathbb{R}$ satisfies $\int_{\mathbb{R}^{n}}\left|  K\left(  x\right)
\right|  dx<\infty$. Let $G:\mathbb{R}\rightarrow\mathbb{R}$ be bounded and
Lipschitz continuous. Then the Lenia arc field $X:M\times\left[  0,1\right]
\rightarrow M$ given by%
\[
X_{t}\left(  f\right)  \left(  x\right)  :=\left[  f\left(  x\right)
+tG(K\ast f)\left(  x\right)  \right]  _{0}^{1}%
\]
satisfies Conditions E1 and E2.

Thus $X$ generates a unique forward flow $F:M\times\left[  0,\infty\right)
\rightarrow M$ forward tangent to $X$.

$F$ is approximated by Euler curves%
\[
F_{t}\left(  f\right)  =\underset{n\rightarrow\infty}{\lim}X_{t/n}^{\left(
n\right)  }\left(  f\right)  \text{.}%
\]
\end{theorem}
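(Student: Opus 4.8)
The goal is to verify Conditions E1 and E2 for the Lenia arc field $X_t(f)(x) = [f(x) + tG(K\ast f)(x)]_0^1$; once this is done, since Proposition \ref{ThmSpdBd} already gives a global speed bound by $\max|G|$ (hence trivially linear speed growth), Theorem \ref{ThmFundAF} immediately delivers the unique forward flow tangent to $X$ and the Euler-curve convergence, so there is nothing further to prove after E1 and E2.

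The plan is as follows. First I would establish an auxiliary Lipschitz estimate for the map $f \mapsto G(K\ast f)$ on $M$ in the supremum metric: for $f,g \in M$ we have $\|K\ast f - K\ast g\|_\infty \le \|K\|_1\, d_\infty(f,g)$ by Young's inequality (pointwise, using actual functions in $\mathcal{B}$, not equivalence classes), and then composing with the Lipschitz function $G$ gives $\|G(K\ast f) - G(K\ast g)\|_\infty \le C_G\|K\|_1\, d_\infty(f,g)$. Call $L := C_G\|K\|_1$. For E1, fix $f_0 \in M$ and take $r,\delta$ arbitrary (say $r=\delta=1$). For $f,g \in B(f_0,r)$ and $0\le t<\delta$, I would bound $d_\infty(X_t(f),X_t(g))$ pointwise: using the "generalized" clip inequality from Section 6, $|[x]_a^b - [y]_c^d| \le \max\{|x-y|,|a-c|,|b-d|\}$ — but here the clip bounds $0,1$ are the same for both, so actually the simpler fact $|[x]_0^1 - [y]_0^1| \le |x-y|$ applies, giving
\[
|X_t(f)(x) - X_t(g)(x)| \le |f(x)-g(x)| + t\,|G(K\ast f)(x) - G(K\ast g)(x)| \le (1+tL)\,d_\infty(f,g).
\]
Taking the supremum over $x$ shows E1 holds with $\Lambda = L = C_G\|K\|_1$.

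The substantive work is E2, which requires estimating $d_\infty(X_{s+t}(f), X_t(X_s(f)))$ and showing it is $O(st)$. Here I would work pointwise in $x$ and exploit the clip-of-a-clip identity $[[x]_a^b]_c^d = [x]_{\max\{a,c\}}^{\min\{b,d\}}$ together with the translation identity $[x+y]_a^b = [x]_{a-y}^{b-y} + y$, mimicking the computation in Example \ref{ExToyDisc} but now with two-sided clips at $0$ and $1$. Writing $u = G(K\ast f)(x)$ and $v = G(K\ast X_s(f))(x)$, the first term is $[f(x)+(s+t)u]_0^1$ and the second is $[[f(x)+su]_0^1 + tv]_0^1$. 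The key point is that $|v - u| \le L\, d_\infty(X_s(f), f) \le L s \max|G| =: Ms$ by the auxiliary estimate and the speed bound from Proposition \ref{ThmSpdBd}. So replacing $v$ by $u$ in the second expression costs at most $t\cdot Ms = Mst$ (clip is $1$-Lipschitz). It then remains to show $|[f(x)+(s+t)u]_0^1 - [[f(x)+su]_0^1 + tu]_0^1| = O(st)$ — in fact I expect this to be bounded by something like $st|u|$ or even to vanish in many cases, via a careful case split on where $f(x)+su$ and $f(x)+(s+t)u$ sit relative to $0$ and $1$. The main obstacle, and the reason the proof is "inevitably complicated" as the authors warn, is precisely managing this case explosion: the inner clip can hit the floor $0$, the ceiling $1$, or neither, and the outer clip likewise, and one must check in each combination that the discrepancy between "clip once with the full step" and "clip, then add, then clip again" is controlled by $st$ times a constant depending only on $\max|G|$. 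I would organize this by noting that the clip floor/ceiling are the same constants throughout (unlike a generic nested clip), so the identity $[[x]_0^1]_0^1 = [x]_0^1$ collapses many cases, and the genuinely distinct scenarios reduce to whether the linear trajectory $\tau \mapsto f(x)+\tau u$ crosses a barrier between parameter values $s$ and $s+t$; a crossing contributes an error of size at most $t|u| \le t\max|G|$, but such a crossing forces $f(x)+su$ to be within $s|u|$ of the barrier, which in the E2 neighborhood one can... actually, more simply, one shows the two quantities differ by at most $st(\max|G|)^2$-type bounds directly. Collecting all cases yields E2 with $\Omega$ a constant multiple of $(\max|G|)^2 + L\max|G|$, completing the verification and hence the theorem.
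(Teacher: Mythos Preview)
Your proposal is correct, and in fact cleaner than you seem to realize. The paper proves this theorem as a corollary of the more general Theorem~\ref{ThmGen1} (arbitrary clip bounds $a(x),b(x)$ and arbitrary locally Lipschitz $V$), and the E2 verification there proceeds via the identity $[x+y]_a^b=[x]_{a-y}^{b-y}+y$ followed by a four-case split on the signs of $V(f)(x)$ and $V(X_s(f))(x)$. Your route is different and more direct: your E1 argument is essentially the same as the paper's, but for E2 you first absorb the discrepancy $|v-u|\le L\,d_\infty(X_s(f),f)\le Ls\max|G|$ via the $1$-Lipschitz clip, and then confront the residual $\bigl|[y+(s+t)u]_0^1-[[y+su]_0^1+tu]_0^1\bigr|$ with $y=f(x)\in[0,1]$. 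You expect this to require a delicate case analysis, but it does not: it is \emph{identically zero}. Indeed, for $u\ge 0$ the trajectory $\tau\mapsto y+\tau u$ starts at $y\ge 0$ so the floor is never active, and one checks directly (whether or not the ceiling $1$ is hit by time $s$) that the two expressions agree; the case $u\le 0$ is symmetric. Hence E2 holds globally with $\Omega=C_G\|K\|_1\max|G|$, and there is no case explosion at all. Your decomposition ``replace $v$ by $u$, then observe the single-direction clip is already a semigroup'' is thus a genuine simplification over the paper's argument in this constant-bound setting; the paper's longer computation is the price paid for the extra generality of Theorem~\ref{ThmGen1}.
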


\begin{proof}
This is a consequence of the more general Theorem \ref{ThmGen1}, proven next.
Specifically, the space of measurable functions $\mathcal{B}\left(
\mathbb{R}^{n},\left[  0,1\right]  \right)  $ is a closed subset of
$\mathcal{B}\left(  \mathbb{R}^{n},\mathbb{R}\right)  $ under the $\sup$ norm.
Then remember the general fact from Fourier Theory about convolutions that if
$K\in L^{1}\left(  \mathbb{R}^{n}\right)  $ and $f\in L^{p}\left(
\mathbb{R}^{n}\right)  $ for any $1\leq p\leq\infty$ then $K\ast f\in
L^{p}\left(  \mathbb{R}^{n}\right)  $ and $\left\|  K\ast f\right\|
_{p}=\left\|  K\right\|  _{1}\left\|  f\right\|  _{p}$ by Fubini's Theorem.
Since $G$ is bounded we see $X$ is continuous in $t$ for each $f\in M$.
Further $X_{t}\left(  f\right)  $ is continuous in $f$ since $G$ is, and $X$
is a well-defined arc field on $M$. Finally, the Lenia arc field is
$X_{t}\left(  f\right)  :=\left[  f+tV\left(  f\right)  \right]  _{0}^{1}$
where $V\left(  f\right)  :=G\left(  K\ast f\right)  $ and this $V$ is
Lipschitz with constant $C_{G}\left\|  K\right\|  _{1}$ where $C_{G}$ is the
Lipschitz constant of $G$ since%
\begin{gather*}
\left\|  V\left(  f\right)  -V\left(  g\right)  \right\|  _{\infty}=\left\|
G\left(  K\ast f\right)  -G\left(  K\ast g\right)  \right\|  _{\infty}\\
\leq C_{G}\left\|  K\ast\left(  f-g\right)  \right\|  _{\infty}=C_{G}\left\|
K\right\|  _{1}\left\|  f-g\right\|  _{\infty}%
\end{gather*}
\end{proof}

\begin{theorem}
\label{ThmGen1}Let $S$ be a measure space and let $a,b:S\rightarrow\left[
-\infty,\infty\right]  $ be functions on $S$ separated by $m:=\underset{x\in
S}{\inf}\left(  b\left(  x\right)  -a\left(  x\right)  \right)  >0$. Let
$\mathcal{B}:=\mathcal{B}\left(  S,\mathbb{R}\right)  $ be the Banach space of
bounded measurable functions $f:S\rightarrow\mathbb{R}$ with norm $\left\|
f\right\|  _{\infty}:=\underset{x\in S}{\sup}\left|  f\left(  x\right)
\right|  $. Define the metric space subspace of $\mathcal{B}$%
\[
M:=\left\{  f\in\mathcal{B}\mid a\left(  x\right)  \leq f\left(  x\right)
\leq b\left(  x\right)  \text{ }\forall x\in S\right\}
\]
with metric given by $d_{\infty}\left(  f,g\right)  :=\left\|  f-g\right\|
_{\infty}$. Finally, let $V:M\rightarrow\mathcal{B}\left(  S,\mathbb{R}%
\right)  $ be a locally Lipschitz map.

Then the arc field $X:M\times\left[  0,1\right]  \rightarrow M$ given by%
\[
X_{t}\left(  f\right)  \left(  x\right)  :=\left[  f+tV\left(  f\right)
\right]  _{a}^{b}\left(  x\right)  :=\left[  f\left(  x\right)  +tV\left(
f\right)  \left(  x\right)  \right]  _{a\left(  x\right)  }^{b\left(
x\right)  }%
\]
satisfies Conditions E1 and E2 and generates a unique local forward flow $F$
tangent to $X$.

$F$ may be approximated by Euler curves%
\[
F_{t}\left(  f\right)  =\underset{n\rightarrow\infty}{\lim}X_{t/n}^{\left(
n\right)  }\left(  f\right)  \text{.}%
\]

If $V$ is globally Lipschitz, then the flow $F$ exists for all time
$F:M\times\left[  0,\infty\right)  \rightarrow M$.
\end{theorem}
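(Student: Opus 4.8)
The plan is to reduce everything to Theorem~\ref{ThmFundAF}: I would verify that $X$ is a genuine arc field on the complete metric space $M$, that its speed is locally bounded (and linearly bounded when $V$ is globally Lipschitz), and that it satisfies Conditions~E1 and~E2. Completeness of $M$ is automatic: $M=\bigcap_{x\in S}\{f:a(x)\le f(x)\le b(x)\}$ is a $d_\infty$-closed subset of the Banach space $\mathcal{B}(S,\mathbb{R})$. Well-definedness $X_t(f)\in M$ is immediate from the pointwise clip, since $a(x)\le[\,\cdot\,]_{a(x)}^{b(x)}\le b(x)$, while boundedness and measurability of $X_t(f)$ follow from those of $f$, $V(f)$, $a$, $b$. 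Because $f\in M$ the clip is inactive, so $[f(x)]_{a(x)}^{b(x)}=f(x)$ and hence $X_0(f)=f$. Joint continuity of $(f,t)\mapsto X_t(f)$ follows from the $1$-Lipschitz bound on the clip in Section~6, which gives $\|X_t(f)-X_s(g)\|_\infty\le\|f-g\|_\infty+\|tV(f)-sV(g)\|_\infty$, a quantity that is small near $(f,t)$ because $V$ is continuous and locally bounded. The speed bound is the computation of Proposition~\ref{ThmSpdBd} verbatim: from $|[x]_a^b-[y]_a^b|\le|x-y|$ one gets $\rho(f)\le\|V(f)\|_\infty$; if $V$ is globally $C_V$-Lipschitz then $\|V(f)\|_\infty\le\|V(f_0)\|_\infty+C_V\|f-f_0\|_\infty$, so $\rho(f_0,r)\le C_Vr+\|V(f_0)\|_\infty$, which is the linear speed growth~\eqref{LinGro}; if $V$ is only locally Lipschitz, $\rho$ is bounded on small balls and the speed is merely locally bounded.

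For E1, I would fix $f_0$ and choose $r>0$ small enough that $V$ is Lipschitz on $B(f_0,r)$ with constant $C_V$. For $f,g\in B(f_0,r)$ and $t\ge0$, the $1$-Lipschitz clip gives, pointwise,
\[
|X_t(f)(x)-X_t(g)(x)|\le|f(x)-g(x)|+t\,|V(f)(x)-V(g)(x)|,
\]
so taking suprema yields $d_\infty(X_t(f),X_t(g))\le(1+tC_V)\,d_\infty(f,g)$; thus E1 holds with $\Lambda=C_V$ and any $\delta\le1$.

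The substantive condition is E2, and here I would mimic the toy computation of Example~\ref{ExToyDisc}, done coordinatewise. Fix $f_0$, pick $R>0$ with $V$ Lipschitz (constant $C_V$) and bounded (by $N:=\sup_{g\in B(f_0,R)}\|V(g)\|_\infty<\infty$) on $B(f_0,R)$, set $r:=R/2$, and take $\delta>0$ small enough that $\|X_s(f)-f_0\|_\infty<R$ for all $f\in B(f_0,r)$ and $0\le s<\delta$ (possible since $\|X_s(f)-f\|_\infty\le sN$). The key step is a pointwise ``frozen-$V$ semigroup identity'': for each $x\in S$, writing $a=a(x)$, $b=b(x)$, $v=V(f)(x)$, and using $a(x)\le f(x)\le b(x)$ together with $s,t\ge0$, a short case analysis on whether $f(x)+sv$ lies below $a$, inside $(a,b)$, or above $b$ (distinguishing the sign of $v$) gives
\[
\bigl[f(x)+(s+t)v\bigr]_a^b=\bigl[\,[f(x)+sv]_a^b+tv\,\bigr]_a^b=\bigl[X_s(f)(x)+tv\bigr]_a^b,
\]
the point being that whenever the inner clip is active it already sits at an endpoint that the outer clip leaves fixed, while the wrong-direction overshoot ($f(x)+sv<a$ with $v>0$, or $f(x)+sv>b$ with $v<0$) cannot occur precisely because $f(x)\in[a(x),b(x)]$. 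Using this identity together with the $1$-Lipschitz clip,
\begin{align*}
\bigl|X_{s+t}(f)(x)-X_t(X_s(f))(x)\bigr|
&=\bigl|[X_s(f)(x)+tV(f)(x)]_a^b-[X_s(f)(x)+tV(X_s(f))(x)]_a^b\bigr|\\
&\le t\,|V(f)(x)-V(X_s(f))(x)|\le tC_V\|f-X_s(f)\|_\infty\le stC_V\|V(f)\|_\infty,
\end{align*}
where the last inequality uses $\|f-X_s(f)\|_\infty=\|[f]_a^b-[f+sV(f)]_a^b\|_\infty\le s\|V(f)\|_\infty$ (and $f,X_s(f)\in B(f_0,R)$ so the $C_V$-Lipschitz bound applies). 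Taking the supremum over $x$ gives $d_\infty(X_{s+t}(f),X_t(X_s(f)))\le st\,C_VN$, i.e.\ E2 with $\Omega=C_VN$.

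Finally, with the speed bound, E1 and E2 in hand, Theorem~\ref{ThmFundAF} finishes the argument: when $V$ is globally Lipschitz the linear speed growth delivers the unique forward flow $F$ tangent to $X$ on all of $[0,\infty)$ together with the Euler-curve formula, while if $V$ is only locally Lipschitz the merely locally bounded speed yields the flow only locally in time, as in the remark following Theorem~\ref{ThmFundAF}. I expect the main obstacle to be the bookkeeping in the pointwise frozen-$V$ identity of the E2 step — several sign and position cases, with the observation that $f(x)\in[a(x),b(x)]$ is exactly what rules out the bad configuration — but beyond the clip-function lemmas of Section~6 and the toy model of Example~\ref{ExToyDisc}, no essentially new idea seems to be required.
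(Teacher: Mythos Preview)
Your proposal is correct and, in the crucial E2 step, genuinely cleaner than the paper's argument. The paper attacks $d_\infty(X_{s+t}(f),X_t(X_s(f)))$ by expanding both terms via the identity $[x+y]_a^b=[x]_{a-y}^{b-y}+y$, then splits into four cases according to the signs of $V(f)(x)$ and $V(X_s(f))(x)$; each case is handled with a different combination of the clip inequalities from Section~6, and the separation hypothesis $m>0$ is invoked (through $s\le m/\rho$) to keep the nested clips well-defined. Your route replaces all of this with the single pointwise observation that for $y\in[a,b]$, $v\in\mathbb{R}$ and $s,t\ge0$ one has $[y+(s+t)v]_a^b=[[y+sv]_a^b+tv]_a^b$ --- the ``frozen-$V$'' map is already a one-parameter semigroup on $[a,b]$ --- which rewrites $X_{s+t}(f)(x)$ as $[X_s(f)(x)+tV(f)(x)]_a^b$ and reduces E2 to a direct application of the $1$-Lipschitz clip and the local Lipschitz bound on $V$. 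This is shorter, avoids the four-case bookkeeping, and incidentally does not appear to need the gap $m>0$ at all for E2. The paper's approach, by contrast, shows more of the machinery of Section~6 in action and makes the role of the case split (same-sign versus opposite-sign drift) explicit, which may be pedagogically useful even if less economical.
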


\begin{proof}
Notice $M$ is not necessarily a Banach space since scalar multiples of
elements in $M$ may leave $M$, but it is a complete metric space since it is a
closed subspace $M\subset\mathcal{B}\left(  S,\mathbb{R}\right)  $ of the
Banach space. So $X$ is a well-defined arc field on $M$ and the speed $\rho$
of $X$ is bounded by the norm of the vector field%
\begin{align*}
\rho\left(  f\right)    & :=\sup_{s\neq t}\frac{d_{\infty}\left(  X_{s}\left(
f\right)  ,X_{t}\left(  f\right)  \right)  }{\left|  s-t\right|  }\\
& =\sup_{s\neq t}\frac{\left\|  \left[  f+sV\left(  f\right)  \right]
_{a}^{b}-\left[  f+tV\left(  f\right)  \right]  _{a}^{b}\right\|  _{\infty}%
}{\left|  s-t\right|  }\\
& \leq\sup_{s\neq t}\frac{\left\|  f+sV\left(  f\right)  -\left(  f+tV\left(
f\right)  \right)  \right\|  _{\infty}}{\left|  s-t\right|  }=\left\|
V\left(  f\right)  \right\|  _{\infty}%
\end{align*}
When $V$ is globally Lipschitz with constant $C_{V}$, then $X$ has linear
speed growth since for any $f_{0}\in M$ and $r>0$%
\[
\rho\left(  f\right)  \leq\left\|  V\left(  f\right)  \right\|  _{\infty}%
\leq\left\|  V\left(  f\right)  -V\left(  f_{0}\right)  \right\|  _{\infty
}+\left\|  V\left(  f_{0}\right)  \right\|  _{\infty}%
\]
gives%
\[
\rho\left(  f_{0},r\right)  :=\sup\left\{  \rho\left(  f\right)  |f\in
B\left(  f_{0},r\right)  \right\}  \leq C_{V}r+\left\|  V\left(  f_{0}\right)
\right\|  _{\infty}%
\]

Let $f_{0}\in M$ and let $\epsilon>0$ be such that $V$ has local Lipschitz
constant $C_{V}$ meaning $\left\|  V\left(  f\right)  -V\left(  g\right)
\right\|  _{\infty}\leq C_{V}\left\|  f-g\right\|  _{\infty}$ for all $f,g\in
B\left(  f_{0},\epsilon\right)  $.

E1 follows from%
\begin{align*}
d_{\infty}\left(  X_{t}\left(  f\right)  ,X_{t}\left(  g\right)  \right)    &
=\left\|  \left[  f+tV\left(  f\right)  \right]  _{a}^{b}-\left[  g+tV\left(
g\right)  \right]  _{a}^{b}\right\|  _{\infty}\\
& \leq\left\|  f+tV\left(  f\right)  -\left(  g+tV\left(  g\right)  \right)
\right\|  _{\infty}\\
& \leq\left\|  f-g\right\|  _{\infty}+t\left\|  V\left(  f\right)  -V\left(
g\right)  \right\|  _{\infty}\\
& \leq\left\|  f-g\right\|  _{\infty}+tC_{V}\left\|  f-g\right\|  _{\infty}\\
& =d_{\infty}\left(  f,g\right)  \left(  1+tC_{V}\right)
\end{align*}
So the local $\Lambda$ for E1 is the local Lipschitz constant $C_{V}$ of $V$.

The last thing to check is E2, which is significantly more complicated. Now
let $s,t$ be such that $0\leq t\leq s$ satisfy $s\leq\min\left\{
\frac{\epsilon}{3\rho},\frac{m}{\rho}\right\}  $ where $m:=\underset{x\in
S}{\inf}\left(  b\left(  x\right)  -a\left(  x\right)  \right)  >0$ so that
$f+sV\left(  f\right)  \in B\left(  f_{0},\epsilon\right)  $ for any $f\in
B\left(  f_{0},\epsilon/3\right)  $. It will be more obvious later that this
is sufficient restriction to guarantee $X_{t}X_{s}\left(  f\right)  $ stays in
$B\left(  f_{0},\epsilon\right)  $ and the subsequent calculations can count
on the spread of $a<b$ to remain valid.

Using the formula $\left[  x+y\right]  _{a}^{b}=\left[  x\right]  _{a-y}%
^{b-y}+y$ we calculate%
\begin{align}
& d_{\infty}\left(  X_{s+t}\left(  f\right)  ,X_{t}\left(  X_{s}\left(
f\right)  \right)  \right)  =\left\|  X_{s+t}\left(  f\right)  -X_{t}\left(
X_{s}\left(  f\right)  \right)  \right\|  _{\infty}\nonumber\\
& =\left\|
\begin{array}
[c]{c}%
\left[  f+\left(  s+t\right)  V\left(  f\right)  \right]  _{a}^{b}\\
-\left[  \left[  f+sV\left(  f\right)  \right]  _{a}^{b}+tV\left(  \left[
f+sV\left(  f\right)  \right]  _{a}^{b}\right)  \right]  _{a}^{b}%
\end{array}
\right\|  _{\infty}\nonumber\\
& =\left\|
\begin{array}
[c]{c}%
\left[  f+sV\left(  f\right)  \right]  _{a-tV\left(  f\right)  }^{b-tV\left(
f\right)  }+tV\left(  f\right)  \\
-\left[  \left[  f+sV\left(  f\right)  \right]  _{a}^{b}\right]  _{a-tV\left(
\left[  f+sV\left(  f\right)  \right]  _{a}^{b}\right)  }^{b-tV\left(  \left[
f+sV\left(  f\right)  \right]  _{a}^{b}\right)  }-tV\left(  \left[
f+sV\left(  f\right)  \right]  _{a}^{b}\right)
\end{array}
\right\|  _{\infty}\nonumber\\
& \leq%
\begin{array}
[c]{c}%
\left\|
\begin{array}
[c]{c}%
\left[  f+sV\left(  f\right)  \right]  _{a-tV\left(  f\right)  }^{b-tV\left(
f\right)  }\\
-\left[  \left[  f+sV\left(  f\right)  \right]  _{a}^{b}\right]  _{a-tV\left(
\left[  f+sV\left(  f\right)  \right]  _{a}^{b}\right)  }^{b-tV\left(  \left[
f+sV\left(  f\right)  \right]  _{a}^{b}\right)  }%
\end{array}
\right\|  _{\infty}\\
+t\left\|  V\left(  f\right)  -V\left(  \left[  f+sV\left(  f\right)  \right]
_{a}^{b}\right)  \right\|  _{\infty}%
\end{array}
\nonumber\\
& \leq\left\|
\begin{array}
[c]{c}%
\left[  f+sV\left(  f\right)  \right]  _{a-tV\left(  f\right)  }^{b-tV\left(
f\right)  }\\
-\left[  \left[  f+sV\left(  f\right)  \right]  _{a}^{b}\right]  _{a-tV\left(
\left[  f+sV\left(  f\right)  \right]  _{a}^{b}\right)  }^{b-tV\left(  \left[
f+sV\left(  f\right)  \right]  _{a}^{b}\right)  }%
\end{array}
\right\|  _{\infty}+\left(  st\right)  C_{V}\rho\label{LineCalc210}%
\end{align}
where $\rho:=\rho\left(  f_{0},\epsilon\right)  \ $which is the finite speed
of $X$. This last line follows because the bound on $s$ keeps $X_{s}\left(
f\right)  $ inside $B\left(  f_{0},2\epsilon/3\right)  $ for any $f\in
B\left(  f_{0},\epsilon/3\right)  $ by the definition of $\rho$. Thus the same
local Lipschitz constant $C_{V}$ and the bound on the speed $\rho$ of $X$
holds so
\begin{align}
& \left\|  V\left(  f\right)  -V\left(  \left[  f+sV\left(  f\right)  \right]
_{a}^{b}\right)  \right\|  _{\infty}\nonumber\\
& \leq C_{V}\left\|  f-\left[  f+sV\left(  f\right)  \right]  _{a}%
^{b}\right\|  _{\infty}\nonumber\\
& =C_{V}\left\|  X_{0}\left(  f\right)  -X_{s}\left(  f\right)  \right\|
_{\infty}\leq C_{V}s\rho\text{.}\label{LineCalc220}%
\end{align}
We will use this calculation twice more in this proof without comment noticing
$X_{t}X_{s}\left(  f\right)  $ also stays inside the ball $B\left(
f_{0},\epsilon\right)  $ since $t<s$.

So the second term $\left(  st\right)  C_{V}\rho=O\left(  st\right)  $ of line
$\left(  \ref{LineCalc210}\right)  $ works for the E2 condition.

Now concentrating on similarly bounding the first term of $\left(
\ref{LineCalc210}\right)  $%
\[
\left\|
\begin{array}
[c]{c}%
\left[  f+sV\left(  f\right)  \right]  _{a-tV\left(  f\right)  }^{b-tV\left(
f\right)  }\\
-\left[  \left[  f+sV\left(  f\right)  \right]  _{a}^{b}\right]  _{a-tV\left(
\left[  f+sV\left(  f\right)  \right]  _{a}^{b}\right)  }^{b-tV\left(  \left[
f+sV\left(  f\right)  \right]  _{a}^{b}\right)  }%
\end{array}
\right\|  _{\infty}%
\]
we consider the cases when $-tV\left(  \left[  f+sV\left(  f\right)  \right]
_{a}^{b}\right)  \left(  x\right)  $ is greater and smaller than $0$.

Remembering $a\leq f\leq b$ and using the general fact of the clip that
$\left[  \left[  x\right]  _{a}^{b}\right]  _{c}^{d}=\left[  x\right]
_{\max\left\{  a,c\right\}  }^{\min\left\{  b,d\right\}  }$ when $\max\left\{
a,c\right\}  \leq\min\left\{  b,d\right\}  $ we have four cases to check:%
\begin{align*}
& \left[  \left[  f+sV\left(  f\right)  \right]  _{a}^{b}\right]
_{a-tV\left(  \left[  f+sV\left(  f\right)  \right]  _{a}^{b}\right)
}^{b-tV\left(  \left[  f+sV\left(  f\right)  \right]  _{a}^{b}\right)
}\left(  x\right)  \\
& =\left\{
\begin{array}
[c]{cc}%
\left[  f+sV\left(  f\right)  \right]  ^{b-tV\left(  \left[  f+sV\left(
f\right)  \right]  _{a}^{b}\right)  }\left(  x\right)   &
\begin{array}
[c]{c}%
\text{if }V\left(  \left[  f+sV\left(  f\right)  \right]  _{a}^{b}\right)
\left(  x\right)  \geq0\\
\text{and }V\left(  f\right)  \left(  x\right)  \geq0
\end{array}
I\\
\left[  f+sV\left(  f\right)  \right]  _{a-tV\left(  \left[  f+sV\left(
f\right)  \right]  _{a}^{b}\right)  }\left(  x\right)   &
\begin{array}
[c]{c}%
\text{if }V\left(  \left[  f+sV\left(  f\right)  \right]  _{a}^{b}\right)
\left(  x\right)  \leq0\\
\text{and }V\left(  f\right)  \left(  x\right)  \leq0
\end{array}
II\\
\left[  f+sV\left(  f\right)  \right]  _{a}^{b-tV\left(  \left[  f+sV\left(
f\right)  \right]  _{a}^{b}\right)  }\left(  x\right)   &
\begin{array}
[c]{c}%
\text{if }V\left(  \left[  f+sV\left(  f\right)  \right]  _{a}^{b}\right)
\left(  x\right)  \geq0\\
\text{and }V\left(  f\right)  \left(  x\right)  \leq0
\end{array}
III\\
\left[  f+sV\left(  f\right)  \right]  _{a-tV\left(  \left[  f+sV\left(
f\right)  \right]  _{a}^{b}\right)  }^{b}\left(  x\right)   &
\begin{array}
[c]{c}%
\text{if }V\left(  \left[  f+sV\left(  f\right)  \right]  _{a}^{b}\right)
\left(  x\right)  \leq0\\
\text{and }V\left(  f\right)  \left(  x\right)  \geq0
\end{array}
IV
\end{array}
\right.
\end{align*}
Notice we are using the previously mentioned bound $t\leq\frac{m}{\rho}$ which
ensures that $\left\|  X_{t}X_{s}\left(  f\right)  -X_{s}\left(  f\right)
\right\|  _{\infty}\leq t\rho\leq b-a$ so $a<b-tV\left(  \left[  f+sV\left(
f\right)  \right]  _{a}^{b}\right)  $ in case $III$ and $a-tV\left(  \left[
f+sV\left(  f\right)  \right]  _{a}^{b}\right)  <b$ in case $IV$ so the clips
are well-defined. 

The first two cases will be handled in a similar way and are relatively easy,
because the dynamics of $X_{s}$ and $X_{t}$ go in the same direction at those
values of $x$. The last two cases give a new twist.

Case I is $V\left(  \left[  f+sV\left(  f\right)  \right]  _{a}^{b}\right)
\left(  x\right)  \geq0$ and $V\left(  f\right)  \left(  x\right)  \geq0$
which gives%
\begin{align*}
& \left|
\begin{array}
[c]{c}%
\left[  f+sV\left(  f\right)  \right]  _{a-tV\left(  f\right)  }^{b-tV\left(
f\right)  }\\
-\left[  \left[  f+sV\left(  f\right)  \right]  _{a}^{b}\right]  _{a-tV\left(
\left[  f+sV\left(  f\right)  \right]  _{a}^{b}\right)  }^{b-tV\left(  \left[
f+sV\left(  f\right)  \right]  _{a}^{b}\right)  }%
\end{array}
\left(  x\right)  \right| \\
& =\left|
\begin{array}
[c]{c}%
\left[  f+sV\left(  f\right)  \right]  ^{b-tV\left(  f\right)  }\\
-\left[  f+sV\left(  f\right)  \right]  ^{b-tV\left(  \left[  f+sV\left(
f\right)  \right]  _{a}^{b}\right)  }%
\end{array}
\left(  x\right)  \right| \\
& \leq\left|
\begin{array}
[c]{c}%
b-tV\left(  f\right) \\
-\left(  b-tV\left(  \left[  f+sV\left(  f\right)  \right]  _{a}^{b}\right)
\right)
\end{array}
\left(  x\right)  \right|  =O\left(  st\right)
\end{align*}
where the last line used $\left|  \left[  x\right]  ^{v}-\left[  x\right]
^{w}\right|  \leq\left|  v-w\right|  $ and then the same calculation as line
$\left(  \text{\ref{LineCalc220}}\right)  $.

For Case II we get an analogous calculation with the same result.

For Case III, we assume $V\left(  \left[  f+sV\left(  f\right)  \right]
_{a}^{b}\right)  \left(  x\right)  \geq0$ and $V\left(  f\right)  \left(
x\right)  \leq0$. We use the general formulas $\left[  x+y\right]  _{a}%
^{b}=\left[  x\right]  _{a-y}^{b-y}+y$ again and $\left[  x\right]  _{a}%
^{b}-\left[  x\right]  _{c}^{d}=\left[  x\right]  _{d}^{b}-\left[  x\right]
_{c}^{a}-\left(  d-a\right)  $ assuming $b\geq d\geq c$ and $b\geq a\geq c$
which will be true when we use it in the third line below.%
\begin{align*}
& \left|
\begin{array}
[c]{c}%
\left[  f+sV\left(  f\right)  \right]  _{a-tV\left(  f\right)  }^{b-tV\left(
f\right)  }\\
-\left[  \left[  f+sV\left(  f\right)  \right]  _{a}^{b}\right]  _{a-tV\left(
\left[  f+sV\left(  f\right)  \right]  _{a}^{b}\right)  }^{b-tV\left(  \left[
f+sV\left(  f\right)  \right]  _{a}^{b}\right)  }%
\end{array}
\left(  x\right)  \right|  \\
& =\left|
\begin{array}
[c]{c}%
\left[  f+sV\left(  f\right)  \right]  _{a-tV\left(  f\right)  }^{b-tV\left(
f\right)  }\\
-\left[  f+sV\left(  f\right)  \right]  _{a}^{b-tV\left(  \left[  f+sV\left(
f\right)  \right]  _{a}^{b}\right)  }%
\end{array}
\left(  x\right)  \right|  \\
& =\left|
\begin{array}
[c]{c}%
\left[  f+sV\left(  f\right)  \right]  _{b-tV\left(  \left[  f+sV\left(
f\right)  \right]  _{a}^{b}\right)  }^{b-tV\left(  f\right)  }\\
-\left[  f+sV\left(  f\right)  \right]  _{a}^{a-tV\left(  f\right)  }%
\end{array}
\left(  x\right)  -\left(
\begin{array}
[c]{c}%
b-tV\left(  \left[  f+sV\left(  f\right)  \right]  _{a}^{b}\right)  \\
-\left(  a-tV\left(  f\right)  \right)
\end{array}
\left(  x\right)  \right)  \right|  \\
& =\left|
\begin{array}
[c]{c}%
\left[  f+sV\left(  f\right)  -\left(  b-a\right)  \right]  _{a-tV\left(
\left[  f+sV\left(  f\right)  \right]  _{a}^{b}\right)  }^{a-tV\left(
f\right)  }\\
-\left[  f+sV\left(  f\right)  \right]  _{a}^{a-tV\left(  f\right)  }%
\end{array}
\left(  x\right)  -\left(
\begin{array}
[c]{c}%
-tV\left(  \left[  f+sV\left(  f\right)  \right]  _{a}^{b}\right)  \\
-\left(  -tV\left(  f\right)  \right)
\end{array}
\left(  x\right)  \right)  \right|  \\
& \leq\left|
\begin{array}
[c]{c}%
\left[  f+sV\left(  f\right)  -\left(  b-a\right)  \right]  _{a-tV\left(
\left[  f+sV\left(  f\right)  \right]  _{a}^{b}\right)  }^{a-tV\left(
f\right)  }\\
-\left[  f+sV\left(  f\right)  \right]  _{a}^{a-tV\left(  f\right)  }%
\end{array}
\left(  x\right)  \right|  +O\left(  st\right)  \\
& =\left|
\begin{array}
[c]{c}%
\left[  f+sV\left(  f\right)  -\left(  b-a\right)  \right]  _{a-tV\left(
\left[  f+sV\left(  f\right)  \right]  _{a}^{b}\right)  }^{a}\\
-\left[  f+sV\left(  f\right)  \right]  _{a}^{a-tV\left(  f\right)  }%
\end{array}
\left(  x\right)  \right|  +O\left(  st\right)  \\
& \leq\left|  a-tV\left(  \left[  f+sV\left(  f\right)  \right]  _{a}%
^{b}\right)  -\left(  a-tV\left(  f\right)  \right)  \right|  \left(
x\right)  +O\left(  st\right)  \leq O\left(  st\right)  +O\left(  st\right)
=O\left(  st\right)
\end{align*}
where we used $b-tV\left(  \left[  f+sV\left(  f\right)  \right]  _{a}%
^{b}\right)  \geq a$ in the second line by the choice of $t\leq s\leq
\frac{m}{\rho}$ at the beginning of the proof, and $\left[  f+sV\left(
f\right)  -\left(  b-a\right)  \right]  _{a-tV\left(  \left[  f+sV\left(
f\right)  \right]  _{a}^{b}\right)  }^{a-tV\left(  f\right)  }=\left[
f+sV\left(  f\right)  -\left(  b-a\right)  \right]  _{a-tV\left(  \left[
f+sV\left(  f\right)  \right]  _{a}^{b}\right)  }^{a}$ in the fifth to sixth
line because $f-b+sV\left(  f\right)  +a\leq sV\left(  f\right)  +a\leq a$.

The last line uses the trick where $\left|  \left[  x\right]  _{b}^{c}-\left[
y\right]  _{a}^{b}\right|  \leq\left|  c-a\right|  $ for $a\leq b\leq c$.

Case IV is an analogous calculation to Case III and gives the same result.
Since all four cases satisfy the $O\left(  st\right)  $ bound, E2 is
satisfied. Therefore the Euler curves converge to a local flow tangent to $X$.

Under the global Lipschitz condition, the bound on the speed of $X$ is linear
so solutions exist for all time.
\end{proof}

Theorem \ref{ThmGen1} is easily generalized to functions $f:S\rightarrow
\mathbb{R}^{I}$ for arbitrary indexing sets $I$ by thinking of these more
complicated functions as simply $f\in\mathcal{B}\left(  S,\mathbb{R}%
^{I}\right)  \simeq\mathcal{B}\left(  S\times I,\mathbb{R}\right)  $ in the
following corollary, which is used in the next section to give extensions of
the Lenia model on $\mathcal{B}\left(  S,\mathbb{R}^{3}\right)  $.

\begin{corollary}
\label{ThmGen2}Let $S$ and $I$ be measure spaces, and define the Banach space
$\mathcal{B}:=\mathcal{B}\left(  S\times I,\mathbb{R}\right)  $ of bounded
measurable functions $f:S\times I\rightarrow\mathbb{R}$ with $\infty$ norm
$\left\|  f\right\|  _{\infty}:=\underset{x\in S,\text{ }i\in I}{\sup}\left|
f_{i}\left(  x\right)  \right|  $ where we write $f_{i}\left(  x\right)
=f\left(  x,i\right)  $. Define the clip function $\left[  \cdot\right]
_{a}^{b}:\mathcal{B}\rightarrow\mathcal{B}$ by clipping coefficients
independently, i.e., $\left[  f\right]  _{a}^{b}$ is defined for each $x$ and
$i$ as%
\[
\left(  \left(  \left[  f\right]  _{a}^{b}\right)  \left(  x\right)  \right)
_{i}:=\left[  f_{i}\left(  x\right)  \right]  _{a_{i}\left(  x\right)
}^{b_{i}\left(  x\right)  }%
\]
where $a_{i}$ and $b_{i}$ are functions from $S$ to the extended reals
$\left[  -\infty,\infty\right]  $ such that $-\infty\leq a_{i}\left(
x\right)  <b_{i}\left(  x\right)  \leq\infty$ for all $i\in I$ with
$\underset{i\in I\text{, }x\in S}{\inf}\left(  b_{i}\left(  x\right)
-a_{i}\left(  x\right)  \right)  >0$.

Define the metric space $M$ to be the functions bounded by the clip%
\[
M:=\left\{  f\in\mathcal{B}\mid\left[  f\right]  _{a}^{b}=f\right\}
\]
Notice $M$ is simply the image of the clip, and so has metric derived from the
norm on $\mathcal{B}$ given by $d_{\infty}\left(  f,g\right)  :=\left\|
f-g\right\|  _{\infty}$ making $M$ a complete metric space.

Let $V:M\rightarrow M$ be a locally Lipschitz map. Then the arc field
$X:M\times\left[  0,1\right]  \rightarrow M$ given by%
\[
X_{t}\left(  f\right)  :=\left[  f+tV\left(  f\right)  \right]  _{a}^{b}%
\]
satisfies Conditions E1 and E2.

Thus $X$ generates a unique local forward flow $F$ tangent to $X$. When $V$ is
globally Lipschitz the flow $F$ exists for all time $F:M\times\left[
0,\infty\right)  \rightarrow M$.
\end{corollary}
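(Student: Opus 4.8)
The plan is to recognize that Corollary \ref{ThmGen2} is simply Theorem \ref{ThmGen1} applied to a relabeled measure space, so that no new analysis is required. First I would set $S' := S\times I$ with its product measure structure, and define $a', b' : S' \to [-\infty,\infty]$ by $a'(x,i) := a_i(x)$ and $b'(x,i) := b_i(x)$. The hypothesis $\inf_{i\in I,\, x\in S}(b_i(x) - a_i(x)) > 0$ is exactly the separation condition $m := \inf_{y\in S'}(b'(y) - a'(y)) > 0$ needed in Theorem \ref{ThmGen1}. Next I would record the identification $\mathcal{B}(S,\mathbb{R}^I) \cong \mathcal{B}(S\times I,\mathbb{R})$ given by $f \leftrightarrow \tilde f$ with $\tilde f(x,i) := f_i(x)$; since both sides carry the genuine supremum norm (not an essential supremum) and $\sup_{x\in S,\, i\in I}|f_i(x)| = \sup_{y\in S'}|\tilde f(y)|$, this is an isometry of normed spaces. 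Under it the coordinatewise clip $[f]_a^b$ corresponds precisely to the pointwise clip $[\tilde f]_{a'}^{b'}$ on $\mathcal{B}(S',\mathbb{R})$.

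Then $M = \{f\in\mathcal{B}(S,\mathbb{R}^I) : [f]_a^b = f\}$ corresponds to $\{\tilde f\in\mathcal{B}(S',\mathbb{R}) : a'(y) \le \tilde f(y) \le b'(y)\ \forall y\in S'\}$, which is exactly the metric space $M$ of Theorem \ref{ThmGen1}; its completeness is inherited as a closed subset of the Banach space $\mathcal{B}(S',\mathbb{R})$. A locally Lipschitz $V : M \to M$ composed with the inclusion $M\hookrightarrow\mathcal{B}(S',\mathbb{R})$ is a locally Lipschitz map $M\to\mathcal{B}(S',\mathbb{R})$, which is all Theorem \ref{ThmGen1} requires of the vector field; and the arc field $X_t(f) = [f + tV(f)]_a^b$ transports to $\tilde X_t(\tilde f) = [\tilde f + t\tilde V(\tilde f)]_{a'}^{b'}$, the arc field treated in Theorem \ref{ThmGen1}. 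Applying that theorem gives that $\tilde X$ satisfies E1 and E2, generates a unique local forward flow $\tilde F$ tangent to $\tilde X$ and approximated by Euler curves, with global existence when $\tilde V$ (equivalently $V$) is globally Lipschitz. Pulling everything back along the isometry yields the claimed statement for $X$ and $F$.

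The only point needing a moment's care --- and the closest thing here to an obstacle --- is checking that the identification $\mathcal{B}(S,\mathbb{R}^I) \cong \mathcal{B}(S\times I,\mathbb{R})$ really preserves all the relevant structure: that \emph{bounded measurable} on one side matches \emph{bounded measurable} on the other (immediate once $S\times I$ is given the product $\sigma$-algebra, and unproblematic precisely because we work with honest functions and honest suprema rather than equivalence classes), that the two supremum norms agree exactly, and that the coordinatewise clip and the pointwise clip are literally the same map. None of this involves estimation; it is bookkeeping. Consequently the E1 and E2 verifications, the linear speed growth bound, and the four-case clip computation from the proof of Theorem \ref{ThmGen1} all carry over verbatim, and there is nothing further to prove.
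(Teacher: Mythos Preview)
Your proposal is correct and matches the paper's approach exactly: the paper offers no separate proof for Corollary~\ref{ThmGen2}, instead noting just before the statement that the result follows from Theorem~\ref{ThmGen1} via the identification $\mathcal{B}(S,\mathbb{R}^{I})\simeq\mathcal{B}(S\times I,\mathbb{R})$. Your write-up simply spells out the bookkeeping behind that one-line reduction.
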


\section{Solution to the integro-differential equation}

In the previous section, we interpreted Chan's formulation of continuous Lenia
as specifying an arc field on the space $M:=\mathcal{B}\left(  \mathbb{R}%
^{n},\left[  0,1\right]  \right)  $, namely the arc field $X:M\times\left[
0,1\right]  \rightarrow M$ is given by%
\[
X_{t}\left(  f\right)  :=\left[  f+tG\circ\left(  K\ast f\right)  \right]
_{0}^{1}\text{.}%
\]
Assuming $G:\mathbb{R}\rightarrow\mathbb{R}$ is a Lipschitz-continuous bounded
function and $K\in L^{1}(\mathbb{R}^{n})$ Theorem \ref{ThmExUn} guarantees a
unique forward flow $F:M\times\left[  0,\infty\right)  \rightarrow M$ that is
forward-tangent to this arc field. In this section we show $F$ is the
sought-after solution of continuous Lenia by proving it solves the original
integro-differential equation. Then we investigate some elementary properties
of that solution.

\bigskip

Focusing on an initial condition $f_{0}\in M$ we define%
\[
f_{t}:=F_{t}\left(  f_{0}\right)  .
\]
For $x\in\mathbb{R}^{n}$ and $t\geq0$,%
\[
\frac{d}{dt^{+}}f_{t}\left(  x\right)  :=\lim_{h\rightarrow0^{+}}%
\frac{f_{t+h}\left(  x\right)  -f_{t}\left(  x\right)  }{h}%
\]
denotes the point-wise forward derivative of $f_{t}\left(  x\right)  $ with
respect to $t$, at the spot $x$. Note that this not a limit of functions in
the sup norm, but rather a pointwise limit of real numbers. Then we obtain the
following forward integro-differential equation:

\begin{proposition}
\label{ThmFwdD}%
\[
\frac{d}{dt^{+}}f_{t}\left(  x\right)  =\left\{
\begin{array}
[c]{cc}%
G(\left(  K\ast f_{t}\right)  (x)) & \text{if }0<f_{t}\left(  x\right)  <1\\
\left[  G\left(  (K\ast f_{t})(x)\right)  \right]  ^{0} & \text{if }%
f_{t}\left(  x\right)  =1\\
\left[  G\left(  (K\ast f_{t})(x)\right)  \right]  _{0} & \text{if }%
f_{t}\left(  x\right)  =0
\end{array}
\right.
\]
for $x\in\mathbb{R}^{n}$ and $t\geq0$.
\end{proposition}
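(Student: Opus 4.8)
The plan is to exploit the forward-tangency of the flow $F$ to the arc field $X$, which by definition says $d_\infty(F_{t+h}(f_0), X_h(F_t(f_0)))/h \to 0$ as $h \to 0^+$. Writing $f_t = F_t(f_0)$, this means that in the sup norm, $f_{t+h} = X_h(f_t) + o(h)$. In particular, pointwise at each fixed $x \in \mathbb{R}^n$, we have $|f_{t+h}(x) - X_h(f_t)(x)| \le d_\infty(f_{t+h}, X_h(f_t)) = o(h)$, so the pointwise forward derivative of $f_t(x)$, if it exists, must equal $\lim_{h\to 0^+}\bigl(X_h(f_t)(x) - f_t(x)\bigr)/h$. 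So the whole proposition reduces to computing this latter limit, i.e.\ the forward derivative at $h=0$ of the real-valued function $h \mapsto X_h(f_t)(x) = \bigl[f_t(x) + h\,G((K*f_t)(x))\bigr]_0^1$, and checking that the pointwise forward derivative of $f_t(x)$ genuinely exists (which follows once we show the one-sided difference quotients converge, since $X_h(f_t)(x)$ is a completely explicit elementary function of $h$).

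**Main computation.** Fix $t \ge 0$ and $x \in \mathbb{R}^n$, abbreviate $p := f_t(x) \in [0,1]$ and $v := G((K*f_t)(x)) \in \mathbb{R}$. Then $X_h(f_t)(x) = [\,p + hv\,]_0^1$. I split into the three cases of the proposition. If $0 < p < 1$, then for all sufficiently small $h > 0$ we have $0 < p + hv < 1$ (since $v$ is a fixed real number), so $X_h(f_t)(x) = p + hv$ and the forward difference quotient is exactly $v$; hence $\frac{d}{dt^+}f_t(x) = v = G((K*f_t)(x))$. If $p = 1$: for small $h>0$, if $v \le 0$ then $p + hv = 1 + hv \le 1$ and (using $h \le m/\rho$-type smallness, or just that eventually $1 + hv \ge 0$) we get $X_h(f_t)(x) = 1 + hv$, giving difference quotient $v = [v]^0$; if $v > 0$ then $1 + hv \ge 1$ so $X_h(f_t)(x) = 1$, giving difference quotient $0 = [v]^0$. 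Either way $\frac{d}{dt^+}f_t(x) = [v]^0 = [G((K*f_t)(x))]^0$. The case $p = 0$ is symmetric: $X_h(f_t)(x) = [hv]_0^1 = \max\{0, hv\}$ for small $h$ (again $hv < 1$ eventually), so the difference quotient is $\max\{0,v\} = [v]_0$, giving $\frac{d}{dt^+}f_t(x) = [G((K*f_t)(x))]_0$. These elementary one-sided limits also establish that the pointwise forward derivative exists.

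**Where the care is needed.** The only genuinely delicate point is the transfer from the sup-norm tangency condition to the pointwise statement, and making sure nothing is lost there. The inequality $|f_{t+h}(x) - X_h(f_t)(x)| \le d_\infty(f_{t+h}, X_h(f_t))$ is immediate from the definition of $d_\infty$, so $\bigl(f_{t+h}(x) - f_t(x)\bigr)/h$ and $\bigl(X_h(f_t)(x) - f_t(x)\bigr)/h$ differ by $o(1)$ uniformly; since the second quantity has a limit as $h \to 0^+$ (by the elementary case analysis above), so does the first, and they agree. One should also note that $K*f_t$ is a well-defined bounded measurable function for each fixed $t$ (since $K \in L^1$ and $f_t \in M \subset \mathcal{B}$, so $K*f_t \in \mathcal{B}$ with $\|K*f_t\|_\infty \le \|K\|_1$), so $v = G((K*f_t)(x))$ is a legitimate fixed real number — this is exactly what lets us say "for small enough $h$" in each case. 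I expect this to be the main (and essentially only) obstacle: organizing the $h \to 0^+$ case split cleanly at the clipping thresholds $0$ and $1$, and being explicit that the flow being tangent in the metric is enough to pin down every pointwise derivative. Everything else is the one-line clip arithmetic above.
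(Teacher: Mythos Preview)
Your proposal is correct and follows essentially the same approach as the paper: reduce the pointwise forward derivative to the forward derivative at $h=0$ of $h\mapsto[\,p+hv\,]_0^1$ via the tangency condition, then compute that limit by a three-case analysis on $p=f_t(x)$. The only cosmetic difference is that the paper first uses the identity $[x+y]_a^b=[x]_{a-y}^{b-y}+y$ to rewrite the difference quotient as $[v]_{-p/h}^{(1-p)/h}$ before taking the limit, whereas you work directly with $[\,p+hv\,]_0^1$; your explicit justification of the passage from sup-norm tangency to pointwise tangency is a point the paper leaves implicit.
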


\begin{proof}
The tangency condition $\left(  \ref{LineTangencyCondition}\right)  $ gives
pointwise
\begin{align*}
0 &  = &  &  \lim_{h\rightarrow0^{+}}%
%TCIMACRO{\QDOVERD{|}{|}{f_{t+h}(x)-\left[  f_{t}(x)+hG\left(  (K\ast
%f_{t})(x)\right)  \right]  _{0}^{1}}{h}}%
%BeginExpansion
\genfrac{|}{|}{}{0}{f_{t+h}(x)-\left[  f_{t}(x)+hG\left(  (K\ast
f_{t})(x)\right)  \right]  _{0}^{1}}{h}%
%EndExpansion
\\
&  = &  &  \lim_{h\rightarrow0^{+}}\left|  \dfrac{f_{t+h}\left(  x\right)
-f_{t}\left(  x\right)  }{h}-\dfrac{h\left[  G(\left(  K\ast f_{t}\right)
\left(  x\right)  )\right]  _{-f_{t}\left(  x\right)  /h}^{(1-f_{t}\left(
x\right)  )/h}}{h}\right|
\end{align*}
and%
\[
\lim_{h\rightarrow0^{+}}\left[  G(\left(  K\ast f_{t}\right)  \left(
x\right)  )\right]  _{-f_{t}\left(  x\right)  /h}^{(1-f_{t}\left(  x\right)
)/h}=\left\{
\begin{array}
[c]{cc}%
G(\left(  K\ast f_{t}\right)  (x)) & \text{if }0<f_{t}\left(  x\right)  <1\\
\left[  G\left(  (K\ast f_{t})(x)\right)  \right]  ^{0} & \text{if }%
f_{t}\left(  x\right)  =1\\
\left[  G\left(  (K\ast f_{t})(x)\right)  \right]  _{0} & \text{if }%
f_{t}\left(  x\right)  =0.
\end{array}
\right.
\]
holds since the argument of the clip does not depend on $h$ while the bounds
of the clip approach $0$, $+\infty$, or $-\infty$.
\end{proof}

Note that we cannot expect an equation involving the two-sided derivative of
$f_{t}(x)$, since it is easy to create examples where $f_{t}(x)$ rises from 0
to 1 with increasing speed and then abruptly comes to a stop at $f_{t}(x)=1 $.

\subsection{Basic Properties}

\begin{example}
We will show that for certain choices of $G$ and $K$, an initial condition
$f_{0}$ with bounded support can have a solution whose support explodes
instantly, i.e., $f_{t}(x)>0$ for all $x\in\mathbb{R}^{n}$ and all $t>0,$
i.e., supp$\left(  f_{t}\right)  =\mathbb{R}^{n}$.

For this, pick a kernel $K \in L^{1}(\mathbb{R}^{n})$ that is continuous,
non-negative, and satisfies $K(0) >0$; take a growth function $G$ with $G(u)
>0$ for all $u >0$ and an initial condition $f_{0}$ for which $\{y\vert
f_{0}(y) >0\}$ has non-empty interior.

Since $G((K\ast f_{t})(x))$ is non-negative and $f_{t}(x)$ is continuous in
$t$, we know by our proposition that $f_{t}(x)$ is monotone increasing as a
function of $t$. Pick $t>0$ and define $S_{t}=int\{y|f_{t}(y)>0\}$, the
interior of the set of values where $f_{t}$ is positive. Let $x$ be an element
of the boundary of $S_{t}$. By our choice of $K$ there exists $\varepsilon>0$
such that $K(z)>0$ whenever $|z|<\varepsilon$. Then there exists $y\in S_{t}$
with $|x-y|<\varepsilon$, and there is a neighborhood $U $ of $y$ where
$f_{t}>0$. By our assumptions on $K$, we have $(K\ast f_{t})(x)>0$ (integral
of a positive function over an open set is positive). This gives $G((K\ast
f_{t})(x))>0$. The expression $G((K\ast f_{t})(x))$ is continuous in $t$ and
$x$, because $G$ is continuous, $f_{t}$ depends continuously on $t$, and
convolution with $K$ is a continuous operator $L^{\infty}(\mathbb{R}%
^{n})\rightarrow L^{\infty}(\mathbb{R}^{n})$ whose image consists entirely of
continuous functions. We can thus find a neighborhood $U$ of $x$ and an open
interval $I$ containing $t$ such that $G((K\ast f_{s})(y))>0$ for all $y\in U$
and $s\in I$. Again by our proposition, and by monotonicity, this implies
$f_{t}(y)>0$ for all $y\in U$. This gives $x\in S_{t}$. So $S_{t}$ is both
open and closed: $S_{t}=\mathbb{R}^{n}$ or $S_{t}=\varnothing$. The latter is
excluded by our choice of $f_{0}$ and by monotonicity.
\end{example}

Admittedly, the previous example is contrived, since normally we would
consider growth functions $G$ with $G(0)<0$ as Chan assumes in \cite{Chan1}
and \cite{Chan2}. It does however exhibit a new effect that does not occur in
discrete Lenia, where a growth function with $G(0)=0$ clearly cannot cause
such an instantaneous support explosion.

The following proposition describes a much more typical behavior: an upper
bound to the speed with which information can travel away from the initial
condition; i.e., the support of $f_{t}$ grows at a bounded rate.

We use $\left\|  \cdot\right\|  _{2}$ to denote the Euclidean distance, or
$L^{2}$ norm, and $\left\lfloor \cdot\right\rfloor $ denotes the integer floor function.

\begin{proposition}
Assume there exists $a>0$ such that $G(u)\leq0$ for $|u|\leq a$, and let $g$
be a finite and positive upper bound for $G$. Further assume that the support
of the kernel $K$ is contained in the open Euclidean ball $B(0,R)\subseteq
\mathbb{R}^{n}$ with radius $R>0$. For $x\in\mathbb{R}^{n}$ define
\[
d(x):=\inf\{\left\|  x-y\right\|  _{2}\mid y\in\text{supp}(f_{0})\}
\]
the distance from $x$ to the support of $f_{0}$. We then have%
\[
f_{t}(x)=0
\]
for every $t$ with%
\[
0<t\leq\frac{a\left\lfloor d(x)/R\right\rfloor }{g\left\|  K\right\|  _{1}%
}\text{.}%
\]
\end{proposition}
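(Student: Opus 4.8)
The plan is to show that information propagates outward from $\operatorname{supp}(f_0)$ at a bounded speed, and more precisely that it takes at least one ``time unit'' of length $a/(g\|K\|_1)$ to cross each successive annulus of width $R$. The key observation is that the growth term $G((K\ast f_t)(x))$ can only become positive at a point $x$ once the convolution $(K\ast f_t)(x)$ exceeds $a$ in absolute value, and since $K$ is supported in $B(0,R)$, the value $(K\ast f_t)(x)$ depends only on the values of $f_t$ on $B(x,R)$. So if $f_t$ vanishes on $B(x,R)$ then $(K\ast f_t)(x)=0$, hence $G((K\ast f_t)(x))\le 0$, and by Proposition~\ref{ThmFwdD} together with the clip at $0$ we get $\frac{d}{dt^+}f_t(x)\le 0$; combined with $f_t(x)\ge 0$ and $f_0(x)=0$ this forces $f_t(x)=0$. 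That handles the first annulus. For later annuli the point is quantitative: even once $f_t(x)$ is allowed to grow, it grows at rate at most $g\|K\|_1$ (since $|G|\le g$ doesn't quite suffice --- we need that $f_t(x)$ can only rise after its neighborhood has risen, and the rate is bounded by $g$, while the ``trigger threshold'' $a$ must be accumulated via the convolution).

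Concretely, I would argue by induction on $k=\lfloor d(x)/R\rfloor$. Let $T_k := ka/(g\|K\|_1)$. Claim: if $d(x) \ge kR$ then $f_t(x)=0$ for all $t\in[0,T_k]$. The base case $k=0$ is trivial. For the inductive step, suppose the claim holds for $k-1$. Take $x$ with $d(x)\ge kR$. For any $y\in B(x,R)$ we have $d(y) > d(x)-R \ge (k-1)R$, so by the inductive hypothesis $f_s(y)=0$ for all $s\in[0,T_{k-1}]$. Now I want to bound $f_s(y)$ for $y\in B(x,R)$ and $s\in[T_{k-1},T_k]$: by Proposition~\ref{ThmFwdD}, $\frac{d}{dt^+}f_t(y)\le G((K\ast f_t)(y)) \le g$ whenever $f_t(y)<1$, and $\le 0$ when $f_t(y)=1$; so in all cases $\frac{d}{dt^+}f_t(y)\le g$, giving (after integrating the forward derivative --- see below) $f_s(y)\le g(s-T_{k-1}) \le g\,(a/(g\|K\|_1)) = a/\|K\|_1$ for $s\in[T_{k-1},T_k]$. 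Therefore $|(K\ast f_s)(x)| \le \|K\|_1 \sup_{y\in B(x,R)} f_s(y) \le \|K\|_1\cdot a/\|K\|_1 = a$, so $G((K\ast f_s)(x))\le 0$ by hypothesis, and hence by Proposition~\ref{ThmFwdD} and the low-pass clip at $0$, $\frac{d}{dt^+}f_t(x)\le 0$ for all $t\in[T_{k-1},T_k]$ (using also that $f_t(x)=0$ on $[0,T_{k-1}]$ by the inductive hypothesis since $d(x)\ge kR>(k-1)R$). Combined with $f_{T_{k-1}}(x)=0$ and $f_t(x)\ge 0$, we conclude $f_t(x)=0$ on $[0,T_k]$, completing the induction. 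Taking $k=\lfloor d(x)/R\rfloor$ yields the stated bound $t\le a\lfloor d(x)/R\rfloor/(g\|K\|_1)$.

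I would fill in two technical lemmas. First, the monotonicity/comparison step: from a one-sided inequality $\frac{d}{dt^+}\phi(t)\le c$ for a continuous real function $\phi$ on an interval, one gets $\phi(s)\le\phi(t_0)+c(s-t_0)$ for $s\ge t_0$ --- this is a standard Dini-derivative comparison argument (if the conclusion failed, the function $\phi(t)-\phi(t_0)-c(t-t_0)$ would be continuous, zero at $t_0$, positive somewhere, hence would have a last zero before its positive value, contradicting the forward-derivative bound there). Continuity of $t\mapsto f_t(x)$ is guaranteed because $F$ is a flow (continuous in $t$ in the sup norm, hence pointwise). Second, the bound $|(K\ast h)(x)|\le\|K\|_1\sup_{B(x,R)}|h|$ when $\operatorname{supp}(K)\subseteq B(0,R)$: immediate from $(K\ast h)(x)=\int_{B(0,R)}K(z)h(x-z)\,dz$.

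The main obstacle I anticipate is handling the forward derivative rigorously rather than pretending $f_t(x)$ is differentiable. Proposition~\ref{ThmFwdD} only gives information about $\frac{d}{dt^+}f_t(x)$, so every ``integration'' step must be justified as a Dini-derivative comparison on the continuous function $t\mapsto f_t(x)$; one must also be slightly careful that the bound $\frac{d}{dt^+}f_t(y)\le g$ holds \emph{for all} $t$ (the clip at $1$ only makes the derivative smaller, so this is fine). A second, smaller subtlety: the induction requires the estimate $f_s(y)\le a/\|K\|_1$ \emph{simultaneously} for all $y\in B(x,R)$, which is fine since the bound $g(s-T_{k-1})$ is uniform in $y$; and one should note $B(x,R)$ is open while $\operatorname{supp}(K)\subseteq B(0,R)$ is the closed support inside an open ball, so the inequality $d(y)>(k-1)R$ is strict, comfortably giving $d(y)\ge(k-1)R$ as needed. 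None of this is deep, but the bookkeeping with the one-sided derivatives is where care is required.
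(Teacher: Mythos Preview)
Your proposal is correct and follows essentially the same approach as the paper: both argue by induction on $k$ with the claim ``$d(x)\ge kR$ implies $f_t(x)=0$ for $t\le ka/(g\|K\|_1)$'', using that $f_t\le g(t-T_{k-1})$ on $B(x,R)$ to bound $|(K\ast f_t)(x)|\le a$ and hence $G\le 0$. The paper only sketches the base case $k=1$ and omits the inductive step and the Dini-derivative comparison lemma, both of which you spell out carefully; your version is strictly more detailed but not different in substance.
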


\begin{proof}
The claim is equivalent to the statement: if $k\in\mathbb{Z}_{\geq1}$ and
$d(x)\geq kR$ and $t\leq ak/(g\left\|  K\right\|  _{1})$, then $f_{t}(x)=0$.
This can be shown by induction on $k$, and the case $k=1$ already contains the
idea. If $d(x)\geq R$, then $f_{0}$ is identically zero on $B(x,R)$, which
means $f_{t}\leq gt$ on that ball (since $f_{t}$ is continuous in $t$ and
Proposition \ref{ThmFwdD} bounds the forward derivative by $g$). This in turn
implies $|(K\ast f_{t})(x)|\leq gt\left\|  K\right\|  _{1}$. So if $gt\left\|
K\right\|  _{1}\leq a$, we'll have $G((K\ast f_{t})(x))\leq0$, meaning that
$f_{t}(x)$ cannot grow and must stay at 0.
\end{proof}

\begin{proposition}
Assume $G$ is Lipschitz and bounded and that $K\in L^{1}$.

If $f_{0}$ is continuous, then $f_{t}$ is continuous for all $t>0$.
\end{proposition}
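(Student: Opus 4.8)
The plan is to isolate the closed subspace $C\subset M$ consisting of bounded continuous functions $\mathbb{R}^{n}\to[0,1]$, to check that the Lenia arc field $X$ maps $C$ into itself, and then to exploit the fact that the flow $F$ is built from Euler curves converging in $d_{\infty}$ — i.e.\ \emph{uniformly} on all of $\mathbb{R}^{n}$ — so that a continuous initial condition produces a flow curve that stays continuous. First I would record that $C$ is closed in $(M,d_{\infty})$: a uniform limit of continuous functions is continuous, and a continuous function $\mathbb{R}^{n}\to[0,1]$ is automatically bounded, so $C=\{f\in M\mid f\text{ continuous}\}$ is a closed, hence complete, subspace of $M$.

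Next I would verify $X_{t}(C)\subseteq C$ for every $t\in[0,1]$. For $f\in C$ the convolution $K\ast f$ is (uniformly) continuous, since $|(K\ast f)(x)-(K\ast f)(x')|\le\|f\|_{\infty}\,\|\tau_{x-x'}K-K\|_{1}\to 0$ by continuity of translation in $L^{1}$; this is exactly the fact already used in the support-explosion example, that convolution by $K\in L^{1}$ lands in the space of continuous functions. Since $G$ is continuous, $G\circ(K\ast f)$ is continuous, hence $f+tG(K\ast f)$ is continuous, and composing with the clip $[\cdot]_{0}^{1}$ (Lipschitz with constant $1$, hence continuous) shows $X_{t}(f)(x)=[f(x)+tG((K\ast f)(x))]_{0}^{1}$ is continuous. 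As $X_{t}(f)$ is already valued in $[0,1]$, we conclude $X_{t}(f)\in C$.

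Now let $f_{0}\in C$. By induction on the number of compositions and the previous step, every Euler iterate $X_{t/n}^{(n)}(f_{0})$ lies in $C$. By Theorem \ref{ThmExUn} these iterates converge to $f_{t}=F_{t}(f_{0})$ in $d_{\infty}$, which is convergence in the supremum norm over all of $\mathbb{R}^{n}$, i.e.\ uniform convergence. Since $C$ is closed in $M$, the limit $f_{t}$ is continuous, for every $t\ge 0$.

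I do not expect a genuine obstacle here; the two points to state carefully are that $d_{\infty}$-convergence is honest uniform convergence on $\mathbb{R}^{n}$ (so it preserves continuity globally, not merely locally), and that continuity of $K\ast f$ holds already for bounded measurable $f$ — both available from the earlier material. If one prefers not to refer back to the Euler construction, an equivalent packaging is to observe that $C$ is a forward-invariant closed (hence complete) subspace, restrict $X$ to $C$ where Conditions E1 and E2 are inherited, obtain a flow on $C$ from Theorem \ref{ThmFundAF}, and identify it with $F|_{C}$ by uniqueness of the flow tangent to $X$.
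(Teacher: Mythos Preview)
Your proof is correct and follows essentially the same approach as the paper: identify the closed subspace $C(\mathbb{R}^{n},[0,1])\subset M$, note that the arc field restricts to it, and conclude by completeness (closedness under uniform limits) that the flow stays in $C$. Your version supplies more detail---the explicit verification that $X_{t}$ preserves continuity via continuity of $K\ast f$ and of the clip---but the argument is the same.
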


\begin{proof}
$C(\mathbb{R}^{n},[0,1])$ is a closed subspace of $\mathcal{B}\left(
\mathbb{R}^{n},\left[  0,1\right]  \right)  $ with supremum metric $d_{\infty
}$ and our arc field restricts to this space. By completeness, the solutions
$f_{t}$ we construct are therefore all continuous functions on $\mathbb{R}%
^{n}$ provided the initial condition $f_{0}$ is.
\end{proof}

\section{Comparison with Asymptotic Lenia}

Asymptotic Lenia \cite{Kawaguchi} is an alternative model that avoids the use
of the clip function, but has also been shown to bear complicated dynamics in
simulations. Instead of using formula $\left(  \ref{LineChanFormulation}%
\right)  $, Asymptotic Lenia is defined by%
\[
f_{t+dt}=f_{t}+dt\left(  T\left(  K\ast f_{t}\right)  -f_{t}\right)
\]
which is interpreted as%
\[
\frac{d}{dt^{+}}f=V\left(  f\right)
\]
where%
\[
V\left(  f\right)  :=T\left(  K\ast f\right)  -f
\]
and $T$ is now restricted to $T:\mathbb{R}\rightarrow\left[  0,1\right]  $ and
is described as unimodal, meaning it has a single maximum in $\left(
0,1\right)  $. Now $T=\frac{G+1}{2}$ and $T\left(  0\right)  =0=T\left(
1\right)  $ is a typical assumption which guarantees solutions starting with
$0\leq f_{0}\left(  x\right)  \leq1$ to remain bounded. To see this, notice if
$f\left(  x\right)  =0$ then $T\geq0$ guarantees $V\left(  f\right)  \left(
x\right)  \geq0$ so the dynamic cannot move a positive initial condition
function negative. Similarly if $f\left(  x\right)  =1$ then since $T\leq1$ we
have $V\left(  f\right)  \left(  x\right)  \leq0$ so the dynamic cannot make
an initial function bounded by $1$ grow past $1$.

Assuming $G$ is Lipschitz with constant $C_{G}$ we then have%
\begin{align*}
\left\|  V\left(  f\right)  -V\left(  g\right)  \right\|  _{\infty}  &
=\left\|  G(K\ast f)-G(K\ast g)-\left(  f-g\right)  \right\|  _{\infty}\\
& \leq C_{G}\left\|  K\ast\left(  f-g\right)  \right\|  _{\infty}+\left\|
f-g\right\|  _{\infty}\\
& \leq\left(  C_{G}\left\|  K\right\|  _{1}+1\right)  \left\|  f-g\right\|
_{\infty}%
\end{align*}
so $V$ is Lipschitz on the Banach space $\mathcal{B}\left(  \mathbb{R}%
^{n},\mathbb{R}\right)  $. Now the traditional Picard-Lindel\"{o}f Theorem
guarantees solutions to Asymptotic Lenia which restrict to the closed subset
$\mathcal{B}\left(  \mathbb{R}^{n},\left[  0,1\right]  \right)  $. This is a
\textit{much} simpler existence proof. Moreover these solutions exist forward
and backward in time and are unique in both directions. Simulations illustrate
the intuition that such solutions would be smoother in some sense, and more
amenable to traditional mathematical analysis.

On the other hand, the original Lenia model yields unique solutions only
forward in time. This means there are more complicated and interesting
dynamics possible in original Lenia. One example is that creatures in
Asymptotic Lenia cannot ``die''. If an initial condition $f_{0}$ is not the
constant zero function, then $f_{t}$ can never become the constant $0$
function in any finite time $t<\infty$ in Asymptotic Lenia. The reason for
this is that uniqueness of solutions prevents any solution from crossing the
constant $0$ integral curve, which is a solution assuming $T\left(  0\right)
=0$. But in original Lenia the creatures \textit{can} die in finite time.
Solutions do intersect continually in the forward direction. That is very rare
using classical continuous mathematical models.

What we find even more interesting about the original Lenia model is that it
models the ``arrow of time'' in a novel way. Most physics equations admit time
symmetry, meaning any system moving forward in time admits the possibility of
moving backward in time as another valid solution. But the 2nd Law of
Thermodynamics (as well as all human experience) says that is not a realistic
model of the world. The clip function makes original Lenia different than
typical continuous dynamics because it throws away information, it displays
entropy, unlike Asymptotic Lenia. This quality of losing information is
stronger than classic models of dissipation, especially the heat equation
$f_{t}=f_{xx}$ which is partially reversible, though Lenia never is. So Lenia
reflects that aspect of reality better, yet still yields unique, continuous,
deterministic solutions \textit{forward} in time. Ilya Prigogine's concluded
that the quality of irreversibility that so many physical systems exhibit
leads us to doubt the use of deterministic models in physics, since they
typically do not display such behavior: ``The more we know about our universe,
the more difficult it becomes to believe in determinism.'' \cite{Prigogine} On
the contrary, we can model very strong irreversibility in a continuous and
deterministic dynamical system as the Lenia model proves.

\section{Extensions}

Studying dynamics on metric spaces gives a natural environment for extending
or generalizing models. For instance, we can feed Lenia's creatures, making
regions in space that help the animals to grow or starve them. For instance we
can add a food function $\phi:\mathbb{R}^{n}\rightarrow\mathbb{R}$ which
encodes the idea that there is food at location $x\in\mathbb{R}^{n}$ if
$\phi\left(  x\right)  >0$ and none if $\phi\left(  x\right)  \leq0$. Then
$X_{t}^{\phi}\left(  f\right)  \left(  x\right)  :=\left[  f\left(  x\right)
+t\left[  \phi\left(  x\right)  \right]  ^{f\left(  x\right)  }\right]
_{0}^{1}$ describes the basic growth of a creature $f$ in response to the
food. The $\left[  \phi\left(  x\right)  \right]  ^{f\left(  x\right)  }$ term
guarantees the creature $f$ doesn't grow any faster than its current strength,
so for instance it doesn't grow where $f\left(  x\right)  =0$. But because
there is no lower bound, it can starve with any specified speed at regions
where $\phi\left(  x\right)  $ is negative. Notice $X_{t}^{\phi}\left(
f\right)  :=\left[  f+t\left[  \phi\right]  ^{f}\right]  _{0}^{1}$ has
$V^{1}\left(  f\right)  :=\left[  \phi\right]  ^{f}$ which is Lipschitz, since%
\[
\left\|  V^{1}\left(  f\right)  -V^{1}\left(  g\right)  \right\|  _{\infty
}:=\left\|  \left[  \phi\right]  ^{f}-\left[  \phi\right]  ^{g}\right\|
_{\infty}\leq\left\|  f-g\right\|  _{\infty}.
\]
Therefore it has a unique local flow for all time by Theorem \ref{ThmGen1}.

Next we can combine the two dynamics%
\[
\left(  X^{2}\right)  _{t}\left(  f\right)  :=\left[  f+t\left(  \left[
\phi\right]  ^{f}+G\left(  K\ast f\right)  \right)  \right]  _{0}^{1}\text{.}%
\]
and we still have that $V^{2}\left(  f\right)  :=\left(  \left[  \phi\right]
^{f}+G\left(  K\ast f\right)  \right)  $ is Lipschitz and so the flow is
guaranteed to exist uniquely for all forward time.

Now it's easy to extend the model to a richer environment where we can keep
track of the food, so that it is depleted when it is eaten. For simplicity we
limit the food in range $\left[  a,b\right]  $. Now $M:=\mathcal{B}\left(
\mathbb{R}^{n}\times\mathbb{R}^{n},\left[  0,1\right]  \times\left[
a,b\right]  \right)  \simeq\mathcal{B}\left(  \mathbb{R}^{n},\left[
0,1\right]  \right)  \times\mathcal{B}\left(  \mathbb{R}^{n},\left[
a,b\right]  \right)  $ and, for instance,%
\[
X_{t}^{3}\left(
\begin{array}
[c]{c}%
f\\
\phi
\end{array}
\right)  :=\left(
\begin{array}
[c]{c}%
\left[  f+t\left(  \left[  \phi\right]  ^{f}+G\left(  K\ast f\right)  \right)
\right]  _{0}^{1}\text{.}\\
\left[  \phi+t\left(  -\left[  \phi\right]  ^{f}\right)  \right]  _{a}^{b}%
\end{array}
\right)  \text{.}%
\]
Now $\phi_{t}$ shrinks if $f$ eats it.

Continuing to complicate the model, we generalize the Lotka-Volterra ODE by
introducing predators and prey. Again we use $M:=\mathcal{B}\left(
\mathbb{R}^{n},\left[  0,1\right]  \right)  \times\mathcal{B}\left(
\mathbb{R}^{n},\left[  0,1\right]  \right)  $ but now allow the food itself to
move and grow like the predators. Define the predator colony $f$ as eating the
prey colony $g$ with%
\[
X_{t}^{4}\left(
\begin{array}
[c]{c}%
f\\
g
\end{array}
\right)  :=\left(
\begin{array}
[c]{c}%
\left[  f+t\left(  \left[  g\right]  ^{f}+G^{1}\left(  K^{1}\ast f\right)
\right)  \right]  _{0}^{1}\\
\left[  g+t\left(  -\left[  g\right]  ^{f}+G^{2}\left(  K^{2}\ast g\right)
\right)  \right]  _{0}^{1}%
\end{array}
\right)
\]
where we allow the predators and prey to have different growth functions
$G^{i}$ and $K^{i}$ assuming they are fundamentally different creatures with
different rules for how they evolve in time.

We can further complicate the model by allowing the predators $f$ to eat the
prey $g$ which eat the food $\phi$ with the arc field on 
$M:=\mathcal{B}
\left(  \mathbb{R}^{n},\left[  0,1\right]  \right)  \times\mathcal{B}\left(
\mathbb{R}^{n},\left[  0,1\right]  \right)  \times\mathcal{B}\left(
\mathbb{R}^{n},\left[  a,b\right]  \right)  $ 
defined by
\begin{align*}
X_{t}^{5}\left(
\begin{array}
[c]{c}%
f\\
g\\
\phi
\end{array}
\right)   & :=\left(
\begin{array}
[c]{c}%
\left[  f+t\left(  \left[  g\right]  ^{f}+G^{1}\left(  K^{1}\ast f\right)
\right)  \right]  _{0}^{1}\\
\left[  g+t\left(  -\left[  g\right]  ^{f}+\left[  \phi\right]  ^{g}%
+G^{2}\left(  K^{2}\ast g\right)  \right)  \right]  _{0}^{1}\\
\left[  \phi+t\left(  -\left[  \phi\right]  ^{g}\right)  \right]  _{a}^{b}%
\end{array}
\right) \\
& =C\left[
\begin{array}
[c]{c}%
f\\
g\\
\phi
\end{array}
+tV^{5}\left(
\begin{array}
[c]{c}%
f\\
g\\
\phi
\end{array}
\right)  \right]
\end{align*}
where the clip function $C$ is%
\[
C\left(
\begin{array}
[c]{c}%
f\\
g\\
\phi
\end{array}
\right)  =\left(
\begin{array}
[c]{c}%
\left[  f\right]  _{0}^{1}\\
\left[  g\right]  _{0}^{1}\\
\left[  \phi\right]  _{a}^{b}%
\end{array}
\right)
\]
and%
\[
V^{5}\left(
\begin{array}
[c]{c}%
f\\
g\\
\phi
\end{array}
\right)  :=\left(
\begin{array}
[c]{c}%
\left[  g\right]  ^{f}+G^{1}\left(  K^{1}\ast f\right) \\
-\left[  g\right]  ^{f}+\left[  \phi\right]  ^{g}+G^{2}\left(  K^{2}\ast
g\right) \\
-\left[  \phi\right]  ^{g}%
\end{array}
\right)
\]
which we demonstrate is Lipschitz using $\left|  \left[  x\right]  _{a}%
^{b}-\left[  y\right]  _{c}^{d}\right|  \leq\max\left\{  \left|  x-y\right|
,\left|  a-c\right|  ,\left|  b-d\right|  \right\}  $ since%
\begin{align*}
& \left\|  V\left(
\begin{array}
[c]{c}%
f^{1}\\
g^{1}\\
\phi^{1}%
\end{array}
\right)  -V\left(
\begin{array}
[c]{c}%
f^{2}\\
g^{2}\\
\phi^{2}%
\end{array}
\right)  \right\|  _{\infty}\\
& =\left\|  \left(
\begin{array}
[c]{c}%
\left[  g^{1}\right]  ^{f^{1}}+G^{1}\left(  K^{1}\ast f^{1}\right) \\
-\left[  g^{1}\right]  ^{f^{1}}+\left[  \phi^{1}\right]  ^{g^{1}}+G^{2}\left(
K^{2}\ast g^{1}\right) \\
-\left[  \phi^{1}\right]  ^{g^{1}}%
\end{array}
\right)  -\left(
\begin{array}
[c]{c}%
\left[  g^{2}\right]  ^{f^{2}}+G^{1}\left(  K^{1}\ast f^{2}\right) \\
-\left[  g^{2}\right]  ^{f^{2}}+\left[  \phi^{2}\right]  ^{g^{2}}+G^{2}\left(
K^{2}\ast g^{2}\right) \\
-\left[  \phi^{2}\right]  ^{g^{2}}%
\end{array}
\right)  \right\|  _{\infty}\\
& =\left\|  \left(
\begin{array}
[c]{c}%
\left[  g^{1}\right]  ^{f^{1}}-\left[  g^{2}\right]  ^{f^{2}}+G^{1}\left(
K^{1}\ast f^{1}\right)  -G^{1}\left(  K^{1}\ast f^{2}\right) \\
\left[  g^{2}\right]  ^{f^{2}}-\left[  g^{1}\right]  ^{f^{1}}+\left[  \phi
^{1}\right]  ^{g^{1}}-\left[  \phi^{2}\right]  ^{g^{2}}+G^{2}\left(  K^{2}\ast
g^{1}\right)  -G^{2}\left(  K^{2}\ast g^{2}\right) \\
\left[  \phi^{2}\right]  ^{g^{2}}-\left[  \phi^{1}\right]  ^{g^{1}}%
\end{array}
\right)  \right\|  _{\infty}\\
& =\max\left\{
\begin{array}
[c]{c}%
\left\|  \left[  g^{1}\right]  ^{f^{1}}-\left[  g^{2}\right]  ^{f^{2}}%
+G^{1}\left(  K^{1}\ast f^{1}\right)  -G^{1}\left(  K^{1}\ast f^{2}\right)
\right\|  _{\infty},\\
\left\|  \left[  g^{2}\right]  ^{f^{2}}-\left[  g^{1}\right]  ^{f^{1}}+\left[
\phi^{1}\right]  ^{g^{1}}-\left[  \phi^{2}\right]  ^{g^{2}}+G^{2}\left(
K^{1}\ast g^{1}\right)  -G^{2}\left(  K^{2}\ast g^{2}\right)  \right\|
_{\infty},\\
\left\|  \left[  \phi^{2}\right]  ^{g^{2}}-\left[  \phi^{1}\right]  ^{g^{1}%
}\right\|  _{\infty}%
\end{array}
\right\} \\
& \leq\max\left\{
\begin{array}
[c]{c}%
\max\left\{  \left\|  g^{1}-g^{2}\right\|  _{\infty},\left\|  f^{1}%
-f^{2}\right\|  _{\infty}\right\}  +\left\|  G^{1}\left(  K^{f^{1}}\ast
f^{1}\right)  -G^{1}\left(  K^{1}\ast f^{2}\right)  \right\|  _{\infty},\\
2\max\left\{  \left\|  g^{1}-g^{2}\right\|  _{\infty},\left\|  f^{1}%
-f^{2}\right\|  _{\infty},\left\|  \phi^{2}-\phi^{1}\right\|  _{\infty
}\right\}  +\left\|  G^{2}\left(  K^{2}\ast g^{1}\right)  -G^{2}\left(
K^{2}\ast g^{2}\right)  \right\|  _{\infty}%
\end{array}
\right\} \\
& \leq2\left\|  \left(
\begin{array}
[c]{c}%
f^{1}\\
g^{1}\\
\phi^{1}%
\end{array}
\right)  -\left(
\begin{array}
[c]{c}%
f^{2}\\
g^{2}\\
\phi^{2}%
\end{array}
\right)  \right\|  _{\infty}+\max\left\{
\begin{array}
[c]{c}%
\left\|  G^{1}\left(  K^{f^{1}}\ast f^{1}\right)  -G^{1}\left(  K^{1}\ast
f^{2}\right)  \right\|  _{\infty},\\
\left\|  G^{2}\left(  K^{2}\ast g^{1}\right)  -G^{2}\left(  K^{2}\ast
g^{2}\right)  \right\|  _{\infty},
\end{array}
\right\} \\
& \leq2\left\|  \left(
\begin{array}
[c]{c}%
f^{1}\\
g^{1}\\
\phi^{1}%
\end{array}
\right)  -\left(
\begin{array}
[c]{c}%
f^{2}\\
g^{2}\\
\phi^{2}%
\end{array}
\right)  \right\|  _{\infty}+\underset{i,j}{\max}\left\{  \left\|
\frac{dG^{i}}{dx}\right\|  _{\infty}\left\|  K^{j}\right\|  _{1}\right\}
\max\left\{  \left\|  f^{1}-f^{2}\right\|  _{\infty},\left\|  g^{1}%
-g^{2}\right\|  _{\infty}\right\} \\
& \leq\left\|  \left(
\begin{array}
[c]{c}%
f^{1}\\
g^{1}\\
\phi^{1}%
\end{array}
\right)  -\left(
\begin{array}
[c]{c}%
f^{2}\\
g^{2}\\
\phi^{2}%
\end{array}
\right)  \right\|  _{\infty}\left(  2+\underset{i,j}{\max}\left\{  \left\|
\frac{dG^{i}}{dx}\right\|  _{\infty}\left\|  K^{j}\right\|  _{1}\right\}
\right)
\end{align*}
so $V^{5}$ is Lipschitz with constant $C^{5}:=2+\underset{i,j}{\max}\left(
\left\|  \frac{dG^{i}}{dx}\right\|  _{\infty}\left\|  K^{j}\right\|
_{1}\right)  $. Therefore Theorem \ref{ThmGen2} guarantees $X^{5}$ (and
therefore all the previous simpler models $X^{1}$ to $X^{4}$) has a unique
flow for all time. Extensions with results similar to these have been explored
with simulations in \cite{Hamon}. That article and the references contained
within it provide an excellent collection of speculations on the value and
future directions of these models that are beyond the scope of the present paper.

\bigskip

Further opportunities for extending the models include time-dependence and
delay dynamics. So, for instance, combined with the previous dynamics the
growth of the creatures may not happen immediately when they encounter a
fertile land. However, by adding a delay, they might store nutrients as long
as they are in the fertile place and grow later. Or from another point of
view, they may not notice the food immediately when they encounter it. The
technical requirements are slightly more complex, but the resulting model is
still just an arc field on a metric space, so the theory naturally supports
the extension.

If we add time dependence, then Lenia becomes a faithful generalization of
neural network models from discrete to continuous. Neural cellular automata or
convolutional neural networks proceed from 2 steps: convolution, then
activation. The activation step is precisely what the $G$ function does in
Lenia. In fact, clipping is commonly used.

Fully connected neural network layers are also generalizable to continuous
models with this formalism using kernels that depend on time $s$ and space $x
$%
\[
X_{s,t}\left(  f\right)  \left(  x\right)  =\left[  f\left(  x\right)
+tG(K_{s,x}\ast f)\left(  x\right)  \right]  _{0}^{1}%
\]
where $K_{s,x}\left(  y\right)  $ depends on $x$ and $s$. Put more simply,
\[
X_{t}\left(  f\right)  =\left[  f+tC(f)\right]  _{0}^{1}%
\]
where $C:\mathcal{B}\left(  \mathbb{R}^{n},\mathbb{R}\right)  \times
\mathbb{R}\rightarrow\mathcal{B}\left(  \mathbb{R}^{n},\mathbb{R}\right)  $
because the input image $f$ is transformed to $C\left(  f\right)  $ in any way
desired in the fully connected layer. (In a convolutional layer $K_{s,x}$ is
constant in $x$.) Then given any finite set of training data, i.e., a
collection of pairs of functions $\left\{  \left(  f_{i},g_{i}\right)  |i\in
I\right\}  \subset\left(  \mathcal{B}\left(  \mathbb{R}^{n},\left[
0,1\right]  \right)  \right)  ^{2}$ we ask whether we can find a time
dependent kernel $K_{s}:\mathbb{R}^{n}\times\mathbb{R}\rightarrow\mathbb{R}$
such that the flow $F_{s,t}$ generated by the time-dependent arc field
$X_{s,t}\left(  f\right)  :=\left[  f+tK_{s}\ast f\right]  _{0}^{1}$ satisfies
$F_{0,1}\left(  f_{i}\right)  =g_{i}$ for all $i\in I$. I.e., the flow $F$
interpolates the training data. This $F$ is then the convolutional neural
network which we seek to predict the result of new inputs $f$ which will give
outputs $F_{0,1}\left(  f\right)  =g$. In this perspective we see a continuous
neural network as an infinite dimensional control problem. Given the ability
to control $K_{s}$ at any time $s$ can we find the $K_{s}$ that guarantees $F$
moves from $\left(  f_{i}\right)  $ to $\left(  g_{i}\right)  $?

\end{document}